\newtheorem{theorem}{Theorem}[section]
\newtheorem{lemma}[theorem]{Lemma}
\theoremstyle{definition}
\newtheorem{claim}[theorem]{Claim}
\newtheorem{conjecture}[theorem]{Conjecture}
\begin{document}

\title{On the transient number of a knot}

\author{Mario Eudave-Mu\~noz}
\address{ \hskip-\parindent
Mario  Eudave-Mu\~noz\\
Instituto de Matem\'aticas\\
 Universidad Nacional Aut\'onoma de M\'exico\\ 
 MEXICO}
 \email{mario@matem.unam.mx}
 
 \author{Joan Carlos Segura-Aguilar}
\address{ \hskip-\parindent
Joan Carlos Segura-Aguilar\\
Universidad Nacional Aut\'onoma de M\'exico\\ 
 MEXICO}
 \email{joancarlos28@gmail.com}

\keywords{knot, transient number, unknotting number, tunnel number, double branched covers}

\subjclass{57K10, 57M12}

\maketitle

\begin{abstract}
The transient number of a knot $K$, denoted $tr(K)$, is the minimal number of simple arcs that have to be attached to
$K$, in order that $K$ can be homotoped to a trivial knot in a regular neighborhood of the union of $K$ and the arcs. 
We give a lower bound for $tr(K)$ in terms of the rank of the first homology group of the double branched cover of $K$.
In particular, if $t(K)=1$, then the first homology group of the double branched cover of $K$ is cyclic. Using this,
we can calculate the transient number of many knots in the tables and show that there are knots
with arbitrarily large transient number. 
\end{abstract}

\section{Introduction}.

Let $K$ be a knot in the 3-sphere and let $M$ be a submanifold of $S^3$ containing $K$. We say that $K$ is
transient in $M$ if $K$ can be homotoped within $M$ to the trivial knot in $S^3$; otherwise $K$ is called
persistent. For example, $K$ is persistent in a regular neighborhood $\mathcal{N}(K)$ of $K$, but it is transient in a 3-ball
$B$ containing $K$. Yuya Koda and Makoto Ozawa \cite{KO} proved that every knot is transient in a submanifold $M$ if 
and only if $M$ is unknotted, that is, its complement in $S^3$ is a union of handlebodies. Then Koda and Ozawa \cite{KO} 
introduced a new invariant of knots, called the transient number of $K$, which somehow measures, starting with 
$\mathcal{N}(K)$, how large must be a submanifold in which $K$ is transient. 

The transient number is defined as follows: given a knot $K$ in $S^3$, 
there is a collection of arcs $\{ \tau_1, \tau_2, \dots, \tau_n\}$, disjointly embedded in $S^3$, each $\tau_i$ 
intersecting $K$ exactly at its endpoints, such that $K$ can be homotoped in a regular neighborhood of $K$ union the arcs, 
$T=\mathcal{N}(K\cup \tau_1 \cup \dots \cup \tau_n)$, into the trivial knot.  That is, we perform crossing changes and
isotopies inside $T$, until we get the trivial knot $K'$. Note that any knot $K'$ obtained from $K$ in this way is not 
trivial in $T$, i.e. it cannot bound a disk contained in $T$, but it can be trivial in $S^3$. The transient number of $K$,
$tr(K)$, is then defined as the minimal number of arcs needed in such a system of arcs. The transient number is 
related to other knot invariants, namely $tr(K)\leq u(K)$, where $u(K)$ is the unknotting number and $tr(K)\leq t(K)$,
where $t(K)$ is the tunnel  number. It is easy to check these inequalities. For the unknotting number, given a sequence
of crossing changes that unknot $K$, consider for each crossing change an arc with endpoints in $K$ that guides
the crossing change, such that a regular neighborhood of the arc encapsulates the crossing change, then clearly 
$K$ can be made trivial in a neighborhood $T$ of $K$ union the arcs. 
For the case of tunnel number, consider a tunnel system and a
neighborhood $T$ of the union of $K$ and the arcs, such that the exterior is a handlebody. Isotope $T$ such that it looks 
like a standard handlebody in $S^3$. Then $K$ can be projected to the intersection of a plane with $T$, and
guided by this projection to a plane,
crossing changes can be performed to $K$ inside $T$ to get the trivial knot.

However a knot $K$ can have $tr(K)=1$, but $u(K)$ and $t(k)$ can be larger than one. Some examples with this
property are given in \cite{KO}. However, in that paper no example is given of a knot $K$ with $tr(K)>1$.
Homology groups of branched covers have been used to bound invariants like $u(K)$ and $t(K)$, which goes back
to the work of Wendt \cite{W}. In fact, it is well known that if $\Sigma[K]$ denotes the double branched cover of $K$, then
the rank of the group $H_1(\Sigma[K])$ gives a lower bound for $u(K)$, see \cite{W} or \cite{M}. It is also not
difficult to show that the rank of $H_1(\Sigma[K])$ is at most $2t(K)+1$; in particular it is known that if $t(K)=1$ then 
$H_1(\Sigma[K])$ is a cyclic group (though not explicitly stated, this follows from the computations
of homology of cyclic covers done in \cite{G}, or from \cite{Paz}). 

In this paper we prove that the rank of the first homology group of a branched cover of a knot give lower bounds for the
transient number. By using the Montesinos trick, it can be shown that if $K$ is a knot with $u(K)=n$, then $\Sigma[K]$
can be obtained by Dehn surgery on an $n$-component link in $S^3$, which implies then the bound for $u(K)$. 
In this paper we do a kind of generalized Montesinos trick. Our main results are the following.

\begin{theorem}\label{icubiertageneral} If $K$ is a knot in $S^3$ such that $tr(K) = n$, then the first homology group of the double branched cover of $K$ has a presentation with at most $2n + 1$ generators.
\end{theorem}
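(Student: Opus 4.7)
The plan is to build an explicit decomposition of $\Sigma[K]$ that exploits both the handlebody structure $T=\mathcal{N}(K\cup\tau_1\cup\cdots\cup\tau_n)$ (a handlebody of genus $n+1$) and the unknot $K'$ into which $K$ is homotoped inside $T$. Write $E=\overline{S^3\setminus T}$, and let $\widetilde{T},\widetilde{E}$ denote their preimages in $\Sigma[K]$. Since the meridian disk of $T$ dual to $K$ has boundary on $\partial T$ linking $K$ once, the restriction of the cover to $\partial T$ is a nontrivial double cover, so by Riemann-Hurwitz $\widetilde{\partial T}$ is connected of genus $2n+1$, forcing $\widetilde{T}$ and $\widetilde{E}$ to be connected as well.

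My first main step is to show that $\widetilde{T}$ is a handlebody of genus $2n+1$, by a handle-decomposition argument. View $T$ as the solid torus $\mathcal{N}(K)$ with $n$ one-handles $\mathcal{N}(\tau_i)\setminus\mathcal{N}(K)$ attached at pairs of disks on $\partial\mathcal{N}(K)$. Since $K$ is the core of $\mathcal{N}(K)$, its double cover branched over $K$ is again a solid torus. Each one-handle is disjoint from $K$, so its preimage is a trivial cover: two disjoint $3$-balls, and each attaching disk on $\partial\mathcal{N}(K)$ lifts to two disks on $\partial\widetilde{\mathcal{N}(K)}$. Thus $\widetilde{T}$ is obtained from a solid torus by adding $2n$ one-handles, giving a handlebody of genus $1+2n=2n+1$.

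The second main step uses the unknot $K'$. The crucial observation is that $\widetilde{E}$ is intrinsically the same cover of $E$ whether one forms $\Sigma[K]$ or $\Sigma[K']$: this cover is determined by the homomorphism $\pi_1(E)\to\mathbb{Z}/2$ sending $\gamma\mapsto\mathrm{lk}(\gamma,K)\bmod 2$, and linking numbers are preserved by the homotopy of $K$ inside $T$ (which is disjoint from loops $\gamma\subset E$). Because $K'$ is unknotted, $\Sigma[K']=S^3$; applying Mayer-Vietoris to $S^3=\widetilde{T'}\cup\widetilde{E}$ (where $\widetilde{T'}$ is the cover of $T$ branched over $K'$) together with $H_1(S^3)=0$ shows that the combined map $H_1(\widetilde{\partial T})\to H_1(\widetilde{T'})\oplus H_1(\widetilde{E})$ is surjective, so in particular $j_\ast:H_1(\widetilde{\partial T})\to H_1(\widetilde{E})$ is surjective.

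Finally, applying Mayer-Vietoris to $\Sigma[K]=\widetilde{T}\cup\widetilde{E}$ expresses $H_1(\Sigma[K])$ as the cokernel of $(i_\ast,-j_\ast):H_1(\widetilde{\partial T})\to H_1(\widetilde{T})\oplus H_1(\widetilde{E})$. Given any $y\in H_1(\widetilde{E})$, surjectivity of $j_\ast$ provides $\alpha\in H_1(\widetilde{\partial T})$ with $j_\ast(\alpha)=y$, so in the cokernel $(0,y)$ becomes equal to $(i_\ast(\alpha),0)$. Hence $H_1(\Sigma[K])$ is a quotient of $H_1(\widetilde{T})\cong\mathbb{Z}^{2n+1}$, yielding the desired presentation with at most $2n+1$ generators. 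I expect the most delicate point to be the handlebody claim for $\widetilde{T}$: one has to track the branching on $\partial\mathcal{N}(K)$ carefully and verify that each lifted $\mathcal{N}(\tau_i)$ attaches as two honest one-handles rather than producing boundary components or reducing the genus.
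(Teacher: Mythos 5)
Your proof is correct, but it extracts the $2n+1$ generators from the opposite side of the splitting surface than the paper does, and the comparison is instructive. The paper's proof of Theorem \ref{cubiertageneral} never decomposes $\Sigma[K]$; it \emph{reconstructs} it from $\Sigma[K']\cong S^3$ by a generalized Montesinos trick: remove $p^{-1}(T)$, glue back $2n$ two-handles along the two lifts of each $\partial D_i$ ($i\leq n$), one more two-handle along the single connected lift of $\partial D_{n+1}$, and a $3$-ball, checking at each stage that the branched covering map extends. The presentation is then read off from that handle structure: the generators are a basis of $H_1(\Sigma[Ext(T)])$, which is free abelian of rank $2n+1$ because that manifold embeds in $S^3$ with connected boundary of genus $2n+1$, and the relators are the classes of the $2n+1$ attaching curves. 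You instead take $\Sigma[K]$ as given and put the generators inside: your key step --- that the cover of $T$ branched over its core $K$ is a genus-$(2n+1)$ handlebody, obtained by lifting the handle structure of $T$ (branched cover of a solid torus plus $2n$ lifted one-handles) --- appears nowhere in the paper's proof of this theorem (the paper makes the analogous observation only in its discussion of tunnel number in Section \ref{branchedcovers}), and your two Mayer--Vietoris computations replace both the paper's rank computation for $H_1(\Sigma[Ext(T)])$ and its explicit handle attachments. You also correctly avoid attributing any structure to $\widetilde{T'}$, the cover branched over $K'$, which indeed need not be a handlebody since $K'$ may sit in $T$ in a complicated way; and your identification of $\widetilde{E}$ inside both $\Sigma[K]$ and $\Sigma[K']$ via invariance of linking numbers under the homotopy is exactly the point the paper obtains for free by working entirely inside $\Sigma[K']$. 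The trade-off: your route never has to verify that a reassembled manifold is the double branched cover of $K$, which is the most laborious part of the paper's argument, but the paper's construction produces explicit relator classes $\lambda^j_r,\lambda$ that are permuted by the deck transformation, and precisely this extra data is consumed by Lemma \ref{grupos} and the proofs of Theorems \ref{cubiertatunel} and \ref{cubiertatransito} to get the sharper conclusion when $tr(K)=1$; your cokernel argument proves the present theorem but does not hand you those relations. (Two small points: the covering restricted to $\partial T$ is unbranched, so ``Riemann--Hurwitz'' is just multiplicativity of Euler characteristic; and surjectivity of $j_*$ is legitimate because for any $y\in H_1(\widetilde{E})$ the pair $(0,-y)$ has a Mayer--Vietoris preimage, which is the form of the statement you actually use.)
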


%By using the same ideas, the following generalization can be proved.

\begin{theorem}\label{icubiertageneral-p} If $K$ is a knot in $S^3$ such that $tr(K) = n$, then the first homology group of the $p$-fold branched cover of $K$ has a presentation with at most $pn + 1$ generators.
\end{theorem}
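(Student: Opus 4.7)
The plan is to adapt the strategy behind Theorem \ref{icubiertageneral} to the $p$-fold branched cover. Let $\{\tau_1,\ldots,\tau_n\}$ realize $tr(K)=n$, set $T=\mathcal{N}(K\cup\tau_1\cup\cdots\cup\tau_n)$ and $E=\overline{S^3\setminus T}$, and let $K'\subset\mathrm{int}(T)$ be the unknot in $S^3$ to which $K$ is homotoped inside $T$. The regular neighborhood $T$ is a handlebody of genus $n+1$, with the natural 1-handle decomposition $T=\mathcal{N}(K)\cup h_1\cup\cdots\cup h_n$, where $h_i=\mathcal{N}(\tau_i)$.

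First I would form the $p$-fold cyclic branched cover $\tilde{T}$ of $T$ along $K$ and observe that it is a handlebody of genus $pn+1$. The branched cover of $\mathcal{N}(K)\cong D^2\times S^1$ along its core is again a solid torus (via $z\mapsto z^p$ on the disk factor). Each 1-handle $h_i$ lies in $T\setminus K$, so it has $p$ disjoint lifts, each of which is again a 1-handle attached to $\widetilde{\mathcal{N}(K)}$. Hence $\tilde T$ is a solid torus with $pn$ 1-handles attached, and $H_1(\tilde T)$ is free of rank $pn+1$. Connectedness of $\tilde T$, and of the induced cover $\widetilde{\partial T}$ of $\partial T$, follows from the fact that a meridian of $K$ can be placed on $\partial T$ and maps to a generator of $\mathbb{Z}/p$.

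The key observation is that because $K$ and $K'$ are homotopic in $\mathrm{int}(T)$, their linking numbers with any loop in $E$ coincide. Hence the induced maps $\pi_1(E)\to\mathbb{Z}/p$ agree, and the $p$-fold cover $\tilde E$ of $E$ appearing in the branched cover of $K$ is identical, with the same gluing to $\widetilde{\partial T}$, to the one appearing in the branched cover of $K'$. This yields two decompositions
\[
\Sigma_p[K]=\tilde T\cup_{\widetilde{\partial T}}\tilde E,\qquad \Sigma_p[K']=\tilde T'\cup_{\widetilde{\partial T}}\tilde E,
\]
and the second equals $S^3$ since $K'$ is unknotted.

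The generator count then falls out of Mayer--Vietoris. From $\Sigma_p[K']=S^3$, the sequence forces $H_1(\widetilde{\partial T})\to H_1(\tilde T')\oplus H_1(\tilde E)$ to be surjective, so for any $b\in H_1(\tilde E)$ there is $c\in H_1(\widetilde{\partial T})$ mapping to $(0,b)$. Plugging the same $c$ into the Mayer--Vietoris sequence for $\Sigma_p[K]$, its image $(i_*(c),b)\in H_1(\tilde T)\oplus H_1(\tilde E)$ vanishes in $H_1(\Sigma_p[K])$; hence every class $[(a,b)]$ equals $[(a-i_*(c),0)]$ and lies in the image of $H_1(\tilde T)$. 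Since $H_1(\tilde T)$ is generated by $pn+1$ elements, so is $H_1(\Sigma_p[K])$. The main delicate point I anticipate is this comparison step: one must verify carefully that the two branched cover constructions produce the same $\tilde E$ and compatible gluings to $\widetilde{\partial T}$, so that a single class $c\in H_1(\widetilde{\partial T})$ can be used in both Mayer--Vietoris sequences simultaneously.
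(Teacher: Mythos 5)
Your argument is correct, and it reaches the bound by a genuinely different mechanism than the paper's. The paper proves the double-cover case (Theorem \ref{cubiertageneral}) and asserts the $p$-fold case follows by the same scheme: inside $\Sigma_p[K']\cong S^3$ one takes the lifted exterior $p^{-1}(Ext(T))$, whose first homology is free of rank $pn+1$ because it is a compact submanifold of $S^3$ bounded by the connected lifted surface $p^{-1}(\partial T)$; one then rebuilds $\Sigma_p[K]$ from this piece by attaching $2$-handles along the $pn$ lifts of $\partial D_1,\dots,\partial D_n$ and the single lift of $\partial D_{n+1}$, capping with a $3$-ball, and checking that the covering map extends (branched over $D_{n+1}$). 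There the generators are a basis of $H_1(p^{-1}(Ext(T)))$ and the relators are the attaching curves, so the paper obtains a presentation with $pn+1$ generators \emph{and} $pn+1$ relations. You instead take generators from the opposite side of the splitting surface: your $\tilde T$ is the branched cover of the handlebody $T$ itself, visibly a genus $pn+1$ handlebody, and rather than reconstructing $\Sigma_p[K]$ by handle attachments you compare the Mayer--Vietoris sequences of the two canonical decompositions $\Sigma_p[K]=\tilde T\cup\tilde E$ and $S^3=\Sigma_p[K']=\tilde T'\cup\tilde E$. Your identification of the two exterior covers, and of their gluings along $\widetilde{\partial T}$, via homotopy invariance of linking numbers is the right justification and is sound (it is the same geometric fact the paper encodes as the intersection parities $\vert D_i\cap K'\vert$ in Claim \ref{disjointcurves}), and your connectivity remark about a meridian of $K$ on $\partial T$ plays the role of Claims \ref{genus} and \ref{otroconexo}. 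What your route buys: no meridian-disk system, no handle-by-handle extension of the covering map, and no appeal to the ``half lives, half dies'' fact that a compact submanifold of $S^3$ with connected genus-$g$ boundary has free $H_1$ of rank $g$ --- for a handlebody this is immediate. What the paper's route buys: an explicit presentation with controlled relators, which is the extra structure actually reused (via Lemma \ref{grupos}) in the proof of Theorem \ref{cubiertatransito}; your argument gives exactly the generator bound the present theorem asks for, but it would not substitute for the paper's construction in that later application.
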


These results imply that $rank(H_1(\Sigma[K])\leq 2tr(K)+1$. Also, if 
$\Sigma_p[K]$ denotes the $p$-fold branched cover of $K$, it follows that $rank(H_1(\Sigma_p[K])\leq ptr(K)+1$. 

For the case that $tr(K)=1$, we can get a better bound. In fact,
by doing a careful calculation of the first homology group of $\Sigma[K]$, we get the following result.

\begin{theorem} \label{icubiertatransito} If $K$ is a knot in $S^3$ such that $tr(K) = 1$, then the first homology group of the double branched cover of $K$ is cyclic. \end{theorem}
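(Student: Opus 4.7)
The plan is to show that $\Sigma[K]$ arises from $S^{3}$ by Dehn surgery on a single knot; since Dehn surgery on a knot in $S^{3}$ always yields a $3$-manifold with cyclic first homology, this gives the conclusion.

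Let $\tau$ be an arc realizing $tr(K)=1$, so that inside $T=\mathcal{N}(K\cup\tau)$, a genus-$2$ handlebody, the knot $K$ is homotopic to a trivial knot $K'\subset T$. The first step is to put $K$ and $K'$ into classical Montesinos position: I claim that, after an isotopy of $K'$ inside $T$, there exists a $3$-ball $B\subset T$ which is a regular neighborhood of $\tau$ together with two sub-arcs of $K$, such that $K$ and $K'$ coincide outside $B$ and both $K\cap B$ and $K'\cap B$ are rational $2$-string tangles in $B$ with the same four endpoints on $\partial B$. In other words, $K'$ is obtained from $K$ by a rational tangle replacement along $\tau$.

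Granting this normalization, the arc $\tau$ has its two endpoints on the branch set and so lifts to a single knot $\widetilde{\tau}\subset\Sigma[K]$: the open arc $\tau\setminus\partial\tau$ lifts to two parallel arcs which share their two endpoints on the lifted branch set, joining up to a circle. The pair $(B,K\cap B)$ lifts to a solid torus whose core is $\widetilde{\tau}$, and the rational tangle replacement becomes a single Dehn surgery on $\widetilde{\tau}$, by the classical Montesinos trick. Hence $\Sigma[K]$ is obtained from $\Sigma[K']=S^{3}$ by Dehn surgery on a single knot, and so $H_{1}(\Sigma[K])$ is cyclic.

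The main obstacle is the normalization step, which is the single-arc instance of the generalized Montesinos construction alluded to in the introduction. A priori the transient homotopy may push $K$ through the $\tau$-handle of $T$ many times, and the guiding arcs of its crossing changes may represent complicated words in $\pi_{1}(T)=\langle[K],[\tau]\rangle$ rather than just $[\tau]$; in the double branched cover these would lift to several distinct knots, giving surgery on a link rather than a single knot. To consolidate everything into one rational tangle replacement, one uses the fact that $K$ and $K'$ both represent the primitive generator $[K]$ of the free group $\pi_{1}(T)$ and that $T$ has the simple $\theta$-graph spine $K\cup\tau$: any auxiliary crossing can be paired with an inverse crossing via handle-slides across the $\tau$-handle and isotopies of $K'$ inside $T$, so that the cumulative effect is recorded by a single tangle in a ball around $\tau$. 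Producing this geometric reorganization cleanly is the technical heart of the argument; once it is in place, cyclicity of $H_{1}(\Sigma[K])$ follows directly from the Montesinos trick applied to the single knot $\widetilde{\tau}$.
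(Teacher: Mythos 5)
Your argument rests entirely on the normalization step --- the claim that, because $tr(K)=1$, the trivial knot $K'$ may be chosen inside $T=\mathcal{N}(K\cup\tau)$ so that it coincides with $K$ outside a ball $B$ which is a regular neighborhood of $\tau$ together with two sub-arcs of $K$, and meets $B$ in a rational tangle --- and this step is a genuine gap, not a technicality. The hypothesis gives only a homotopy of $K$ to $K'$ inside $T$: a sequence of crossing changes separated by isotopies, in which the intermediate knots may run through the $\tau$-handle in essential and different ways at different times. Turning these non-simultaneous local moves into a single tangle replacement supported in one fixed ball is exactly what is missing, and your sentence about pairing auxiliary crossings with inverse crossings via handle slides is an assertion, not an argument. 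Note also that what you claim is strictly stronger than the theorem: it would show that $\Sigma[K]$ is obtained from $S^3$ by Dehn surgery on a single knot, and the same consolidation applied to an $n$-arc transient system would show that $\Sigma[K]$ is surgery on an $n$-component link, hence that $H_1(\Sigma[K])$ has a presentation with $n$ generators. The paper only obtains $2n+1$ generators (Theorem \ref{icubiertageneral}), describes its construction as only ``a kind of'' generalized Montesinos trick, and poses the sharpness of its bounds as an open question --- strong evidence that the consolidation you assume is not available by known techniques.

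For contrast, the paper's proof of Theorem \ref{cubiertatransito} never consolidates the homotopy and never produces a surgery description. In its main case (a compressing disk $E_1$ for $\partial T$ lying inside $T$) it finds a trivial knot $K''$ homotopic to $K$ in $T$ meeting a meridian disk of $T$ exactly once, builds $\Sigma[K]$ from $\Sigma[K'']=S^3$ by attaching three 2-handles to the lift of $Ext(T)$ (along the two lifts of $\partial D_1$ and the single lift of $\partial D_2$) plus a 3-ball, and so obtains a presentation of $H_1(\Sigma[K])$ with three generators and three relations; cyclicity then comes from the purely algebraic Lemma \ref{grupos}, which exploits the symmetry of this presentation under the nontrivial deck transformation ($\theta_1\leftrightarrow\theta_2$, $\lambda_1\leftrightarrow\lambda_2$, with $\theta_3,\lambda_3$ fixed) together with the relations imposed by the projection. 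The remaining case, where $\partial T$ compresses only in $Ext(T)$, is handled by a winding-number-zero satellite argument using Seifert's theorem and the tunnel number one case (Theorem \ref{cubiertatunel}); your normalized picture excludes this configuration by fiat rather than by proof. To make your route work you would first have to prove that $tr(K)=1$ implies $K$ can be unknotted by a single rational tangle replacement along the transient arc; as it stands this is an unproven (and possibly false) strengthening of the theorem, not a lemma you may invoke.
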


Of course, these results may not be sharp. It would be interesting to find sharp bounds for these inequalities.
It would also be interesting to find bounds for the transient number depending on other classical invariants of knots.

Given any knot invariant, it is always interesting to study its behavior under connected sums of knots. We have the
following:

\begin{theorem} Let $K_1$, $K_2$ be knots in $S^3$. Then $tr (K_1\# K_2) \leq tr (K_1)+tr (K_2) + 1$. \end{theorem}

The paper is organized as follows. In Section \ref{branchedcovers} we sketch a proof that the unknotting number and 
tunnel number are bounded below by the rank of the first homology group of 
double branched covers. Then prove the main results. As part of the proofs, we show also that if $t(K)=1$,
then $H_1(\Sigma[K])$ is cyclic; this is used in the proof of Theorem \ref{icubiertatransito}. In Section \ref{examples}
we give examples of knots with large transient number and explore the transient number of knots in the tables ok KnotInfo
\cite{Knot}. In Section \ref{connectedsums} we consider the transient number of a 
connected sum of knots,  prove some facts and propose some problems. 

Through the paper we work in the piecewise linear category. To avoid cumbersome notation we 
use expressions like the double branched cover of a knot to mean the double cover of $S^3$ branched along the knot. If
$\Lambda$ is a simple closed curve in the boundary of a 3-manifold $M$, we say adding a 2-handle along $\Lambda$,
to mean that we attach a 2-handle $D^2\times I$ to $M$, such that $\partial D^2 \times I$ is identified with a regular
neighborhood of $\Lambda$ in $\partial M$, which is an annulus. Also, if $M$ and $T$ are compact 3-manifods,
with $T\subset M$, then by $M\backslash T$ we mean $M$ minus the interior of $T$, or well the closure in $M$ of $M-T$.
If $X$ is a topological space, $\vert X \vert$ denotes its number of components. 

\section{Transient number and double branched covers}\label{branchedcovers}

This section is inspired by an idea that is used to build the double branched cover of a knot with unknotting number 
equal to one. Consider a knot $K$ in $S^3$ with unknotting number equal to one. Let $\alpha$ be an arc embedded 
in $S^3$, with endpoints in $K$, such that a regular neighborhood of it encapsulates the crossing change. 
So there is an homotopy in $\mathcal{N}(K \cup \alpha)$ 
between the knot $K$ and the trivial knot, which is denoted by $K'$. Clearly this homotopy can be taken so that it is 
constant in $\mathcal{N}(K) \backslash\, \mathcal{N}(\alpha)$ and that the changes are occurring only in 
$\mathcal{N}(\alpha)$; so we assume that $K'$ is obtained from $K$ just by taking the
 two arcs $K \cap \mathcal{N}(\alpha)$ and passing one arc through the other, which would correspond to a crossing 
change in the corresponding knot diagram. Due to the above we have that 
$K\cap (S^3 \backslash \mathcal{N}(\alpha))=K' \cap (S^3 \backslash \mathcal{N}(\alpha))$.

Let $\Sigma (K')$ be the double branched cover of the knot $K'$, with covering
function given by $p : \Sigma(K') \rightarrow S^3$. Now, since $K'$ is the trivial knot, $\Sigma(K')$ is 
homeomorphic to $S^3$. We know that $\mathcal{N}(\alpha)$ is a 3-ball intersecting $K'$ in two arcs, therefore 
$p^{-1}(\mathcal{N}(\alpha))$ is a solid torus, and $p^{-1}(\partial \mathcal{N}(\alpha))$ is a surface of 
genus one. Therefore, $S^3 \backslash p^{-1}(\mathcal{N}(\alpha))$ is a double cover of 
$S^3 \backslash\, \mathcal{N}(\alpha)$ branched along $K \cap (S^3 \backslash \, \mathcal{N}(\alpha))$. So to finish building the double branched cover of the knot $K$, 
all we have to do is to refill $S^3 \backslash \, p^{-1}(\mathcal{N}(\alpha))$ appropriately.

Note that there exists a compressing disk for $\partial (\mathcal{N}(\alpha)) \backslash K$ contained in 
$\mathcal{N}(\alpha) \backslash K$; we denote this disk by $D$ (see Figure 12). 
As $K \cap D = \emptyset$ then $\vert K' \cap D\vert$ is an even number, so the curve
$\partial D$ is lifted by $p$ into two curves in $p^{-1}(\partial \mathcal{N}(\alpha))$; we
denote these curves by $\Lambda_1$ and $\Lambda_2$. Let $\Sigma'$ be the 3-manifold obtained by
adding two 2-handles to the 3-manifold $S^3 \backslash \, p^{-1}(\mathcal{N}(\alpha))$, attached along 
the curves $\Lambda_1$ and $\Lambda_2$; we denote these 
2-handles by $\overline{\Lambda}_1$ and $\overline{\Lambda}_2$ respectively. So 
$\Sigma' =[S^3 \backslash p^{-1}(\mathcal{N}(\alpha))]\cup[\overline{\Lambda}_1 \cup \overline{\Lambda}_2]$.

We know that $\Lambda_1 \cup \Lambda_2$ is a double cover of $\partial D$ with covering function given
by $p \vert_{\Lambda_1 \cup \Lambda_2}$. So we can extend the function $p \vert_{\Lambda_1 \cup \Lambda_2}$
to $\overline{\Lambda}_ 1\cup \overline{\Lambda}_2$, to get that $\overline{\Lambda}_1 \cup \overline{\Lambda}_2$ is a double cover of $\mathcal{N}(D)$. From this follows that $\Sigma'$ is a double cover of 
$[S^3 \backslash p^{-1}(\mathcal{N}(\alpha))]\cup \mathcal{N}(D)$ branched along two arcs of $K$.

We have that $\partial ([S^3\backslash \mathcal{N}(\alpha)]\cup [\mathcal{N}(D)])$ consists of two 2-spheres and 
$\partial \Sigma'$ also consists of two 2-spheres. Also, the 2-spheres of $\partial \Sigma'$ are a double cover of the 
two spheres of $\partial ([S^3\backslash \mathcal{N}(\alpha)]\cup \mathcal{N}(D)]$ branched over 
the points $K \cap \partial([S^3\backslash \mathcal{N}(\alpha)]\cup \mathcal{N}(D))$. 

Now we can fill the sphere boundary components of $\Sigma'$ with 3-balls, and extend the function $p$ to these 3-balls in order to get the double covering of $S^3$ branched along the knot $K$.

The idea described above is known as the Montesinos trick. Similar to the previous construction, 
we will build the double branched covers of knots for which we 
know the tunnel number or the transient number. 
For the case of tunnel number, note that
if $K$ has tunnel number $n$, then $K$ is contained in a genus $(n+1)$-handlebody $V$, such that its
complement is another genus $(n+1)$-handlebody $W$. By taking $\Sigma[K]$, $V$ and $W$ lift to genus
$(2n+1)$-handlebodies, that is, give a genus $2n+1$ Heegaard decomposition of $\Sigma[K]$. This shows
that $H_1(\Sigma[K])$ is an abelian group of rank at most $2n+1$.

The following lemma is a general result of coverings which we will use often. The proof is a standard argument, we omit it.

\begin{lemma} \label{conexo} Let $M$ be a given 3-manifold. Let $\Sigma$ be a double cover of $M$ with covering function $p : \Sigma \rightarrow M$; and let $C \subset M$. If $M$ is path connected and $p^{-1}(C)$ is connected 
then $\Sigma$ is connected.
\end{lemma}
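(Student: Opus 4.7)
The plan is to prove the contrapositive: assuming $\Sigma$ is disconnected, I will show that $p^{-1}(C)$ must be disconnected as well. Since $p$ is a degree-two covering of the connected space $M$, a disconnection of $\Sigma$ can consist of at most two pieces, so I would write $\Sigma = \Sigma_1 \sqcup \Sigma_2$ with both $\Sigma_i$ nonempty clopen subsets.

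The key step is to check that each restriction $p|_{\Sigma_i}\colon \Sigma_i \to M$ is itself a homeomorphism. First I would verify it is a covering map: over any evenly covered open set $U \subset M$ with $p^{-1}(U) = \bigsqcup_\alpha V_\alpha$, each connected sheet $V_\alpha$ lies entirely in either $\Sigma_1$ or $\Sigma_2$, so partitioning the $V_\alpha$ according to the component they inhabit exhibits evenly covered neighborhoods for $p|_{\Sigma_i}$. Since $M$ is (path) connected, the degrees of $p|_{\Sigma_1}$ and $p|_{\Sigma_2}$ are well-defined global invariants that must sum to $2$ over every point of $M$, and each is positive because $\Sigma_i \neq \emptyset$ forces surjectivity. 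Hence both degrees equal $1$, and each $p|_{\Sigma_i}$ is a homeomorphism onto $M$.

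From here the contradiction is immediate: one writes
\[
p^{-1}(C) \;=\; (p|_{\Sigma_1})^{-1}(C) \;\sqcup\; (p|_{\Sigma_2})^{-1}(C),
\]
so $p^{-1}(C)$ decomposes as two disjoint homeomorphic copies of $C$, one in each of the separated clopen sets $\Sigma_1$, $\Sigma_2$. Since the hypothesis that $p^{-1}(C)$ is connected forces $C$ to be nonempty, both pieces on the right are nonempty, and this is a genuine separation of $p^{-1}(C)$, contradicting the assumption.

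There is no serious obstacle here; the argument is entirely routine covering-space bookkeeping, which is doubtless why the authors are content to omit the proof. The only delicate point is the purely formal one that the degrees of $p|_{\Sigma_i}$ are well defined on the connected base $M$ and must sum to the degree of $p$, which is what ultimately pins each restriction down to a homeomorphism.
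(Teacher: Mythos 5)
Your proof is correct: the paper itself omits the argument as standard, and what you give is exactly that standard argument --- split a disconnected double cover into two nonempty clopen pieces, use local triviality plus connectedness of $M$ to conclude each piece maps homeomorphically onto $M$, and observe that $p^{-1}(C)$ would then split into two disjoint copies of $C$ separated by the clopen pieces. Two cosmetic points: the assertion that a disconnection of $\Sigma$ has at most two pieces is a consequence of your degree argument rather than something to assume at the outset (you only need some separation into two clopen sets, which is the definition of disconnectedness), and the nonemptiness of $C$ --- needed so that both copies are nonempty --- is automatic in every application in the paper, where $C$ is a boundary curve, so the convention-dependent question of whether the empty set is connected never arises.
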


The following theorem is our first important result of this section. We will see that if we know the transient number of a knot 
we can construct the double branched cover of this knot and from there calculate its first homology group.

\begin{theorem}\label{cubiertageneral} If $K$ is a knot in $S^3$ such that $tr(K) = n$, then the first homology group of the double branched cover of $K$ has a presentation with at most $2n + 1$ generators.
\end{theorem}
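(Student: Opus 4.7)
The plan is to imitate the Montesinos construction sketched above, now with $n$ arcs instead of one. Given $tr(K)=n$, fix arcs $\tau_1,\dots,\tau_n$ and a homotopy inside $T = \mathcal{N}(K\cup\tau_1\cup\cdots\cup\tau_n)$ from $K$ to a trivial knot $K'$, and construct both double branched covers $\Sigma[K]$ and $\Sigma[K'] \cong S^3$ in parallel. Since $K$ and $K'$ are freely homotopic in $T$ they agree in $H_1(T;\mathbb{Z}/2)$, so the $\mathbb{Z}/2$-covering data on $\partial T$ coincides in the two branched covers and the preimage of $S^3\setminus T$ is the same unbranched double cover $X$ in both constructions, with boundary a closed surface $F$. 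Setting $Y = p_K^{-1}(T)$ and $Y' = p_{K'}^{-1}(T)$ we obtain the decompositions $\Sigma[K] = X \cup_F Y$ and $S^3 = X \cup_F Y'$.

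The first main step is to show that $Y$ is a handlebody of genus $2n+1$. Decompose $T = \mathcal{N}(K) \cup H_1 \cup \cdots \cup H_n$, where each $H_i$ is the 1-handle $\overline{\mathcal{N}(\tau_i)\setminus \mathcal{N}(K)}$ attached to the solid torus $\mathcal{N}(K)$. The preimage $p_K^{-1}(\mathcal{N}(K))$ is the double cover of $S^1\times D^2$ branched along its core, which is again a solid torus; each $H_i$ is disjoint from $K$ and simply connected, so $p_K^{-1}(H_i)$ is a disjoint pair of 1-handles attached to this solid torus along the two lifts of each attaching disk. Hence $Y$ is a genus $(2n+1)$ handlebody and $F$ has genus $2n+1$.

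The final step combines two Mayer-Vietoris computations. Applied to $S^3 = X\cup_F Y'$, since $H_1(S^3)=0$, the map $\phi': H_1(F)\to H_1(X)\oplus H_1(Y')$, $\alpha \mapsto (\iota_X(\alpha),-\iota_{Y'}(\alpha))$, is surjective. The half-lives-half-dies principle applied to the compact orientable 3-manifolds $X$ and $Y'$, each with connected genus $(2n+1)$ boundary, gives $\operatorname{rank} H_1(X) = \operatorname{rank} H_1(Y') = 2n+1$ and the surjectivity of $\iota_{Y'}$; hence $\ker \iota_{Y'}$ is a free abelian subgroup of $H_1(F)\cong \mathbb{Z}^{4n+2}$ of rank $2n+1$. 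Projecting the surjectivity of $\phi'$ onto the $H_1(X)$-factor yields $H_1(X) = \iota_X(\ker \iota_{Y'})$, so $H_1(X)$ is generated by at most $2n+1$ elements. Applying Mayer-Vietoris now to $\Sigma[K] = X\cup_F Y$, together with the surjectivity of $\iota_Y: H_1(F) \to H_1(Y)$ (valid because $Y$ is a handlebody), shows that the natural map $H_1(X) \to H_1(\Sigma[K])$ is surjective, so $H_1(\Sigma[K])$ admits a presentation with at most $2n+1$ generators. The delicate point I expect to require the most care is matching the unbranched cover $X$ in the two constructions and verifying that the gluings along $F$ are compatible, which rests on the homology equality $[K]=[K']$ in $H_1(T;\mathbb{Z}/2)$ coming from the homotopy.
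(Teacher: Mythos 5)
Your proof is correct, and while it rests on the same geometric core as the paper's proof --- the generalized Montesinos trick, passing to $\Sigma[K']\cong S^3$ and using that the double covers of $Ext(T)$ determined by $K$ and by $K'$ coincide (you justify this by invariance of mod-2 linking under the homotopy in $T$; the paper by the parity of intersections with the meridian disks $D_1,\dots,D_{n+1}$) --- the way you extract the homology bound is genuinely different. The paper reconstructs $\Sigma[K]$ explicitly from $\Sigma[Ext(T)]$ by attaching $2n$ 2-handles along the two lifts of each $\partial D_i$, $i\le n$, one 2-handle along the single lift of $\partial D_{n+1}$, and a 3-ball, and reads off a presentation of $H_1(\Sigma[K])$ with $2n+1$ generators and $2n+1$ relations. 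You instead prove that $Y=p_K^{-1}(T)$ is a genus $2n+1$ handlebody (a solid torus plus $2n$ lifted 1-handles, which is the dual description of the paper's 2-handles-plus-ball) and run Mayer--Vietoris twice: in $S^3=X\cup_F Y'$, combined with half-lives-half-dies, to get $H_1(X)=\iota_X(\ker\iota_{Y'})$ generated by at most $2n+1$ elements, and in $\Sigma[K]=X\cup_F Y$, combined with surjectivity of $H_1(F)\to H_1(Y)$, to conclude that $H_1(X)\to H_1(\Sigma[K])$ is onto. Your route is cleaner for the stated bound, avoids the paper's curve-lifting and connectivity claims, and actually proves the fact the paper only asserts (that a compact submanifold of $S^3$ bounded by a connected genus $2n+1$ surface has free $H_1$ of rank $2n+1$); the paper's route buys an explicit presentation whose generators and relations are permuted by the deck transformation, which is precisely what is exploited later in Lemma \ref{grupos} and Theorems \ref{cubiertatunel} and \ref{cubiertatransito}, so the explicit construction cannot be replaced by your argument elsewhere in the paper. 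Two minor points: surjectivity of $\iota_{Y'}$ and $\operatorname{rank} H_1(X)=2n+1$ follow from the Mayer--Vietoris surjectivity \emph{together with} half-lives-half-dies, not from the latter alone (you have both in hand, so this is only a matter of attribution); and the ``delicate point'' you flag about matching gluings is weaker than you fear, since your two Mayer--Vietoris computations use the two splittings separately, so only the homeomorphism type of the common exterior cover $X$ enters, and that is settled once the classifying homomorphisms $\pi_1(Ext(T))\to\mathbb{Z}/2$ agree.
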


\begin{proof} Let $K$ be a knot in $S^3$ such that $tr(K) = n$, let $\{ \tau_1,\tau_2,...,\tau_n\}$ be a transient system for $K$, and let $T =\mathcal{N}(K\cup \tau_1 \cup \tau_2 \cup, \dots, \cup \tau_n)$, this is a genus $n+1$ handlebody. Let 
$K' \subset T$ be the trivial knot, such that $K'$ is homotopic to $K$ in $T$.

Let us define a family of compressing disks for $\partial T$ properly embedded in $T$, say 
$\{D_1, D_2, \dots , D_n, D_{n+1}\}$, which satisfy the following properties (see Figure 13):
\begin{enumerate}

\item For each $i \in \{1, 2, . . . , n\}$ the disk $D_i$ is properly embedded in $\mathcal{N}(\tau_i)$.
\item The disk $D_{n+1}$ is properly embedded in $\mathcal{N}(K)$ and is a compression disk for it.
\end{enumerate}

All of these disks are properly embedded in $T$, so we can deduce that:
\begin{enumerate}
\item The family $\{D_1, D_2, \dots, D_n, D_{n+1}\}$ is pairwise disjoint. 
\item For each $i \in \{1,2,\dots,n\}$, $\vert D_i \cup K \vert = 0$.
\item $\vert D_{n+1} \cap  K \vert = 1$.
\end{enumerate}

Let $\Sigma[K']$ be the double branched cover of $K'$ with covering function given by
$p : \Sigma[K'] \rightarrow S^3$. Note that $\Sigma[K']$ is homeomorphic to $S^3$.

\begin{claim}\label{disjointcurves} For each $i \in \{1, 2, \dots, n\}$, $p^{-1}(\partial D_i)$ has exactly two connected
components, where each connected component is a simple closed curve
in $p^{-1}(\partial T)$; whereas $p^{-1}(\partial D_{n+1})$ is a single simple closed curve in
$p^{-1}(\partial T)$. Also, all these curves are disjoint in $\partial T$. \end{claim}

\begin{proof} We know that $\vert D_{n+1} \cap K \vert = 1$ and $\vert D_i \cap K \vert = 0$ for all 
$i \in \{1,2,\dots,n\}$. As $K'$ is homotopic to $K$ in $T$, then $\vert D_{n+1}\cap K'\vert$ is an odd integer and 
$\vert D_i \cap K'\vert$ is an even integer for all $i \in \{1,2,\dots,n\}$. 
Therefore, for each $i \in \{1,2,...,n\}$ we have that $p^{-1}(\partial D_i)$ has exactly two connected
components in $p^{-1}(\partial T)$, where each connected component is a simple closed curve; and
$p^{-1}(\partial D_{n+1})$ is a simple closed connected curve in $p^{-1}(\partial T)$.
Now, since the disks of the family $\{D_1, D_2, \dots, D_{n+1}\}$ are pairwise disjoint, 
we have that all the curves are pairwise disjoint. \end{proof}

\begin{claim}\label{genus} $p^{-1}(\partial T)$ is a connected, orientable surface with Euler characteristic $-4n$ 
(and genus $2n + 1$) contained in $\Sigma[K']$. \end{claim}

\begin{proof} Note that $\partial T$ is a genus $n+1$ surface, then $\chi(\partial T) = -2n$, and therefore 
$\chi (p^{-1}(\partial T))) = 2\chi(\partial T ) = -4n$.
Since $\partial T$ is connected, $p^{-1}(\partial T)$ is a double cover of 
$\partial T$,  $\partial D_{n+1} \subset \partial T$ and $p^{-1}(\partial D_{n+1})$ is a connected curve on 
$p^{-1}(\partial T)$, then by Lemma \ref{conexo} we have that $p^{-1}(\partial T)$ is connected. Therefore 
$p^{-1}(\partial T)$ is a connected orientable surface of Euler characteristic $-4n$ (and of genus $2n + 1$).\end{proof}

\begin{claim}\label{otroconexo} $p^{-1}(\partial T \backslash \cup ^{n}_{j=1} \partial D_j)$ is connected. \end{claim}

\begin{proof} Clearly $\partial  T \backslash \cup^{n}_{j=1}\partial D_j$ is connected. We have that 
$p^{-1}(\partial T  \backslash \cup ^{n}_{j=1} \partial D_j)$ is a double cover of 
$\partial T \backslash \cup^{n}_{j=1}\partial D_j$, 
 that $\partial D_{n+1} \subset \partial T \backslash \cup ^{n}_{j=1} \partial D_j$ and that
$p^{-1}(\partial D_{n+1})$ is a connected curve on $p^{-1}(\partial T \backslash \cup ^{n}_{j=1} \partial D_j$), then using Lemma \ref{conexo} we have that $p^{-1}(\partial T \backslash \cup^{n}_{j=1}\partial D_j)$ is connected. \end{proof}

By Claim \ref{disjointcurves} we know that for each $i \in \{1,2,\dots,n\}$ the curve
$\partial D_i$ lifts, under $p$, to exactly two simple closed curves in $p^{-1}(\partial T)$. 
Let us denote by $\Lambda^{i}_1$
and $\Lambda^{i}_2$ the two liftings of $\partial D_i$ in $p^{-1}(\partial T)$, so 
$\{\Lambda^1_1,\Lambda^1_2,\Lambda^2_1,\Lambda^2_2,...,\Lambda^n_1,\Lambda^n_2\}$ is a pairwise disjoint 
collection of simple closed curves in $p^{-1}(\partial T)$. Also, $\Lambda^i_1 \cup \Lambda^i_2$ is a double cover of 
$\partial D_i$ with $p\vert_{\Lambda^i_1\cup \Lambda^i_2}$ the corresponding covering function, then the functions
$p\vert_{\Lambda^i_1}: \Lambda^i_1 \rightarrow \partial D_i$ and $p \vert_{\Lambda^i_2} : \Lambda^i_2 \rightarrow \partial D_i$ are homeomorphisms.

By Claim \ref{disjointcurves} we have that $p^{-1}(D_{n+1})$ is a simple closed curve on $p^{-1}(\partial T)$. 
Let us denote by $\Lambda$ the curve $p^{-1}(\partial D_{n+1})$. So $\Lambda$ is a double cover for $\partial D_{n+1}$ with covering function 
$p \vert_\Lambda : \Lambda \rightarrow \partial D_{n+1}$.

Let us introduce the following notations: 

\begin{itemize}

\item  $Ext(T):=S^3 \backslash T$,
\item $\Sigma [Ext(T)] := \Sigma [K'] \backslash p^{-1}(T)$,
\end{itemize}

Note that $\Sigma[Ext(T)]$ is a double cover of $Ext(T)$. 
Note also that $\partial \Sigma [Ext(T )] = p^{-1}(\partial T)$.

Let $\Sigma [Ext(K)]$ be the 3-manifold obtained from $\Sigma [Ext(T)]$ by adding a 2-handle
along each of the members of the family of curves 
$\{\Lambda^1_1, \Lambda^1_2, \Lambda^2_1, \Lambda^2_2, \dots, \Lambda^n_1, \Lambda^n_2\}$. Since the functions 
$p \vert_{\Lambda^i_r}$ are homeomorphisms for each $i \in \{1,2,\dots,n\}$ and $r \in \{1,2\}$, we can extend these 
homeomorphisms to homeomorphisms whose domains are discs whose boundaries are $\Lambda^i_r$, which map
to the disks $D_i$. We then extend these last homeomorphisms to homeomorphisms from the 2-handle added
along $\Lambda^i_r$ to $\mathcal{N}(D_i)$. With this we conclude that $\Sigma[Ext(K)]$ is a double 
cover of $Ext(T) \cup (\cup ^n_{j=1} \mathcal{N}(D_j))$. Recall that the family of disks $\{D_1, D_2, \dots, D_n\}$ was chosen 
such that $Ext(T ) \cup (\cup ^n_{j=1} \mathcal{N}(Dj ))$ is homeomorphic to $Ext(K)$. Therefore $\Sigma [Ext(K)]$ 
is a double cover of $Ext(K)$.

On the other hand, from Claim \ref{genus} we know that $p^{-1}(\partial T)$ is an orientable connected surface of 
genus $2n + 1$ and by Claim \ref{otroconexo} we know that $p^{-1}(\partial T\backslash \cup^n_{j=1} \partial D_j)$ is 
connected. Since $\{\Lambda^i_1,\Lambda^1_2,\Lambda^2_1,\Lambda^2_2,...,\Lambda^n_1,\Lambda^n_2\}$ 
consist of $2n$ curves and

$$p^{-1}(\partial T\backslash \cup ^n_{j=1} \partial D) = p^{-1}(\partial T)\backslash \cup_{i\in \{1,2,...,n\}  r\in \{1,2\}} \Lambda ^i_r ,$$

\noindent then $\partial\Sigma[Ext(K)]$ is an orientable surface of genus one.

Now, note that $\partial D_{n+1} \subset \partial Ext(K)$ since $\partial D_{n+1} \subset \partial \mathcal{N}(K)$ and 
$D_{n+1} \cap  D_i = \emptyset$ for all $i \in \{1,2,...,n\}$. Therefore we also have $\Lambda \subset \partial\Sigma[Ext(K)]$.

Let us define the 3-manifold $\Sigma[K]$ obtained from $\Sigma[Ext(K)]$ by adding a 2-handle along $\Lambda$ 
on $\partial \Sigma[Ext(K)]$, and then complete with a 3-ball so that $\Sigma[K]$ is a closed 3-manifold. 
Since $p\vert_\Lambda$ is a two-to-one covering function then we can extend this function to a function 
that goes from a disk, whose boundary is $\Lambda$, to the disk $D_{n+1}$, where this extension is two-to-one 
branched at the point $K \cap D_{n+1}$. 
This last function is then extended to a function that goes from the 2-handle added along
$\Lambda$ to 
$\mathcal{N}(D_{n+1})$, where this function is two to one branched along the arc $K \cap \mathcal{N}(D_{n+1})$. 
Finally, this last function is 
extended to the added 3-ball, thus obtaining a function that goes from $\Sigma[K]$ to $S^3$ which is two to one branched along the knot $K$. From the above we conclude that $\Sigma[K]$ is the double branched cover of $K$.

Now we know from Claim \ref{genus} that $p^{-1}(\partial T)$ is an orientable connected surface of genus $2n + 1$ contained in 
$S^3$. Since $\partial \Sigma[Ext(T )] = p^{-1} (\partial T)$ and $\Sigma[Ext(T )] \subset \Sigma[K'] = S^3$ then 
$H_1(\Sigma[Ext(T)])$ is a free abelian group of rank $2n+1$. So, let 
$H_1(\Sigma[Ext(T)])=<\theta_1,\theta_2,...,\theta_{2n+1}>$,
where $\theta_i$ for $i \in \{1,2,...,2n + 1\}$ are generators.

Thus, $H_1(\Sigma[K]) =< \theta_1,\theta_2,...,\theta_{2n+1} \, \vert \, \lambda_1^1,\lambda_1^2,\lambda_2^1, \lambda_2^2,...,\lambda_n^1,\lambda_n^2,\lambda >$,
where $\lambda$ and the $\lambda^j_r$, for $j \in \{1,2,...,n\}$ and $r \in \{1,2\}$, correspond to the homology classes in 
$H_1(\Sigma[Ext(T)])$ of the respective curves $ \Lambda$ and $\Lambda_r^j$.
\end{proof}

It should be noted that in the proof of Theorem \ref{cubiertageneral}, besides from proving
the result, we construct the double cover of $S^3$ branched along the knot for which we know the transient number.
This construction will continue to be repeated throughout this work. Theorem \ref{cubiertageneral} can be generalized to  $p$-fold branched covers, with a similar proof.

\begin{theorem}\label{cubiertageneral-p} If $K$ is a knot in $S^3$ such that $tr(K) = n$, then the first homology group of the $p$-fold branched cover of  $K$ has a presentation with at most $pn + 1$ generators.
\end{theorem}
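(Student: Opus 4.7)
The plan is to mimic the construction from the proof of Theorem \ref{cubiertageneral}, systematically replacing the double cover by the $p$-fold cyclic branched cover. I start with a transient system $\{\tau_1, \ldots, \tau_n\}$ for $K$, set $T = \mathcal{N}(K \cup \tau_1 \cup \cdots \cup \tau_n)$, pick a trivial knot $K' \subset T$ homotopic to $K$ in $T$, and take the same compressing disks $D_1, \ldots, D_n, D_{n+1}$ as before. Write $q : \Sigma_p[K'] \to S^3$ for the $p$-fold cyclic branched cover of $K'$; since $K'$ is trivial, $\Sigma_p[K'] \cong S^3$.

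The technical heart of the argument is the lifting analysis. Since $D_i$ is disjoint from $K$ for $i \leq n$ and the homotopy from $K$ to $K'$ takes place in the interior of $T$ (disjoint from $\partial D_i \subset \partial T$), the algebraic intersections satisfy $D_i \cdot K' = D_i \cdot K = 0$, so $\mathrm{lk}(\partial D_i, K') = 0$. The cyclic monodromy $\pi_1(S^3 \setminus K') \to \mathbb{Z}/p$ therefore vanishes on $\partial D_i$, and $q^{-1}(\partial D_i)$ consists of exactly $p$ disjoint simple closed curves $\Lambda_1^i, \ldots, \Lambda_p^i$, each mapping homeomorphically to $\partial D_i$. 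By the same reasoning, $\mathrm{lk}(\partial D_{n+1}, K') = \pm 1$, so the monodromy on $\partial D_{n+1}$ generates $\mathbb{Z}/p$ and $q^{-1}(\partial D_{n+1})$ is a single simple closed curve $\Lambda$ wrapping $p$ times around $\partial D_{n+1}$. The obvious analog of Lemma \ref{conexo} for $p$-fold covers (with the same proof) then gives that $q^{-1}(\partial T)$ and $q^{-1}(\partial T \setminus \cup_j \partial D_j)$ are connected; combined with $\chi(q^{-1}(\partial T)) = p\,\chi(\partial T) = -2pn$, it follows that $q^{-1}(\partial T)$ is a closed orientable surface of genus $pn + 1$.

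From here the construction parallels the $p = 2$ case. Let $\Sigma_p[Ext(T)] = \Sigma_p[K'] \setminus q^{-1}(T)$; as a submanifold of $S^3$ bounded by a genus $pn + 1$ surface, $H_1(\Sigma_p[Ext(T)])$ is free abelian of rank $pn + 1$. Attaching $pn$ two-handles along the curves $\Lambda_j^i$ yields the $p$-fold unbranched cyclic cover of $Ext(K) = Ext(T) \cup \bigcup_j \mathcal{N}(D_j)$, because each set $\Lambda_1^i, \ldots, \Lambda_p^i$ extends to a $p$-to-$1$ unbranched cover of $\mathcal{N}(D_i)$. Attaching a final 2-handle along $\Lambda$ and capping the resulting sphere boundary with a 3-ball reconstructs $\Sigma_p[K]$, as this handle together with the 3-ball maps as a $p$-to-$1$ cover of $\mathcal{N}(D_{n+1})$ branched along $K \cap \mathcal{N}(D_{n+1})$. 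Each 2-handle imposes one relation on $H_1$, giving a presentation of $H_1(\Sigma_p[K])$ with $pn + 1$ generators and $pn + 1$ relations. The main new point compared with the $p = 2$ argument is verifying that the monodromy image on $\partial D_{n+1}$ generates $\mathbb{Z}/p$ for every $p$, which is what keeps $q^{-1}(\partial T)$ connected and pins down the genus count; the linking computation $\mathrm{lk}(\partial D_{n+1}, K') = \pm 1$ makes this automatic, so the obstacle dissolves.
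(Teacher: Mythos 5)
Your proposal is correct and takes essentially the approach the paper intends: the paper proves the case $p=2$ in detail in Theorem \ref{cubiertageneral} and then merely asserts that Theorem \ref{cubiertageneral-p} follows ``with a similar proof,'' which is exactly the construction you carry out. The one point that genuinely needs new justification for general $p$ --- replacing the paper's parity argument for $\vert D_i \cap K'\vert$ by the computation of $\mathrm{lk}(\partial D_i, K') \bmod p$ via algebraic intersection numbers, so that each $\partial D_i$ lifts to $p$ curves while $\partial D_{n+1}$ lifts to a single curve --- is precisely the point you identify and resolve, and the rest of your handle-by-handle reconstruction of $\Sigma_p[K]$ from the rank-$(pn+1)$ free abelian group $H_1(\Sigma_p[Ext(T)])$ mirrors the paper's argument.
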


The next lemma is a general result of algebra of groups, which we will use for the proof of Theorems \ref{cubiertatunel} 
and \ref{cubiertatransito}.

\begin{lemma}\label{grupos} Let $G_1$ and $G_2$ be abelian groups such that

$G_1 =< \theta_1,\theta_2,\theta_3 : \lambda_1,\lambda_2,\lambda_3 >$ and $G_2 =< \beta_1,\beta_2 : \delta_1,\delta_2 >$.

Let $\Psi :< \theta_1,\theta_2,\theta_3 >\rightarrow < \theta_1,\theta_2,\theta_3 >$ and 
$\Phi :< \theta_1,\theta_2,\theta_3 >\rightarrow < \beta_1,\beta_2 >$ be homomorphisms between free abelian groups such that:

$$\begin{array}{ccccc}
\Psi(\theta_1) = \theta_2 & \Psi(\lambda_1) = \lambda_2 & \vert & \Phi(\theta_1) = \beta_1 & \Phi(\lambda_1) = \delta_1 \\
 \Psi(\theta_2) = \theta_1 & \Psi(\lambda_2) = \lambda_1 & \vert & \Phi(\theta_2) = \beta_1 & \Phi(\lambda_2) = \delta_1 \\
 \Psi(\theta_3) = \theta_3  &  \Psi(\lambda_3) = \lambda_3 & \vert &  \Phi(\theta_3) = 2\beta_2 & \Phi(\lambda_3) = 2\delta_2
 \end{array}$$

If $\lambda_1 = x\theta_1 + y\theta_2 + z\theta_3$ and $G_2$ is the trivial group, then $G_1$ is isomorphic to $Z_{x-y}$.
\end{lemma}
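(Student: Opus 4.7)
The plan is to use the symmetries encoded by $\Psi$ and $\Phi$ to write down explicit presentation matrices for $G_1$ and $G_2$, and then exploit the triviality of $G_2$ (which is equivalent to a $\pm 1$ determinant condition) to reduce the presentation matrix of $G_1$ to Smith normal form.

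First I would unpack the $\Psi$-equivariance. Since $\Psi$ swaps $\theta_1$ and $\theta_2$ and fixes $\theta_3$, the equation $\Psi(\lambda_1) = \lambda_2$ applied to $\lambda_1 = x\theta_1 + y\theta_2 + z\theta_3$ gives $\lambda_2 = y\theta_1 + x\theta_2 + z\theta_3$, while $\Psi(\lambda_3) = \lambda_3$ forces $\lambda_3 = a\theta_1 + a\theta_2 + c\theta_3$ for some integers $a, c$. This produces the presentation matrix
$$M = \begin{pmatrix} x & y & z \\ y & x & z \\ a & a & c \end{pmatrix}$$
for $G_1$. Applying $\Phi$ to the same three expressions and using $\Phi(\lambda_1) = \delta_1$, $\Phi(\lambda_3) = 2\delta_2$ yields $\delta_1 = (x+y)\beta_1 + 2z\beta_2$ and $\delta_2 = a\beta_1 + c\beta_2$, so the presentation matrix of $G_2$ is
$$N = \begin{pmatrix} x+y & 2z \\ a & c \end{pmatrix},$$
and the hypothesis that $G_2$ is trivial becomes $(x+y)c - 2za = \pm 1$.

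Finally I would reduce $M$ to Smith normal form. The integer operations $R_1 \to R_1 - R_2$ followed by $C_2 \to C_2 + C_1$ put $M$ in block form with $x-y$ in the upper-left entry, zeros across the rest of the first row, and the $2\times 2$ block $\bigl(\begin{smallmatrix} x+y & z \\ 2a & c \end{smallmatrix}\bigr)$ in the lower right, whose determinant equals $\det N = \pm 1$. Because that block has unit determinant, it can be reduced to the identity by row and column operations on the last two rows and columns, after which a column clear on $C_1$ removes the remaining entries below $x-y$. The Smith normal form of $M$ is thus $\mathrm{diag}(x-y,1,1)$, proving $G_1 \cong \mathbb{Z}_{x-y}$.

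There is really no hard step; the entire argument is routine integer linear algebra. The one pleasant observation that makes everything line up is the factorization $\det M = (x-y)\bigl((x+y)c - 2za\bigr) = (x-y)\det N$, in which the $\Psi$-symmetry produces the $(x-y)$ factor and the $\Phi$-hypothesis eliminates the rest via $\det N = \pm 1$.
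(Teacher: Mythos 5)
Your proof is correct and follows essentially the same route as the paper: use the $\Psi$-symmetry to force the presentation matrix of $G_1$ into the symmetric form $\begin{pmatrix} x & y & z \\ y & x & z \\ a & a & c \end{pmatrix}$, use $\Phi$ (dividing the relation $2\delta_2 = 2a\beta_1 + 2c\beta_2$ by $2$ in the free abelian group) to identify the presentation matrix of $G_2$, invoke triviality of $G_2$ to get unimodularity, and reduce $G_1$'s matrix to $\mathrm{diag}(x-y,1,1)$. The only difference is cosmetic: the paper realizes the final reduction by multiplying by an explicit integer inverse $S$ of the $G_2$ matrix, whereas you do it by elementary row and column operations, with the determinant identity $\det M = (x-y)\det N$ making the same point.
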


\begin{proof} Let $a_{ij}$ be integers, with $i, j \in \{1, 2, 3\}$, such that: 
\begin{equation} \label{sistem1}
\begin{split}
\lambda_1 = a_{11}\theta_1 + a_{12}\theta_2 + a_{13}\theta_3 \\
\lambda_2 = a_{21}\theta_1 + a_{22}\theta_2 + a_{23}\theta_3  \\
\lambda_3 = a_{31}\theta_1 + a_{32}\theta_2 + a_{33}\theta_3
\end{split}
\end{equation}

Applying the homomorphism $\Psi$, on both sides of the previous system of equations, we obtain:

\begin{equation} \label{sistem2}
\begin{split}
\lambda_2 = \Psi(\lambda_1) = \Psi(a_{11}\theta_1 + a_{12}\theta_2 + a_{13}\theta_3) = a_{11}\theta_2 + a_{12}\theta_1 + a_{13}\theta_3 \\
\lambda_1 = \Psi(\lambda_2) = \Psi(a_{21}\theta_1 + a_{22}\theta_2 + a_{23}\theta_3) = a_{21}\theta_2 + a_{22}\theta_1 + a_{23}\theta_3 \\
\lambda_3 = \Psi(\lambda_3) = \Psi(a_{31}\theta_1 + a_{32}\theta_2 + a_{33}\theta_3) = a_{31}\theta_2 + a_{32}\theta_1 + a_{33}\theta_3
\end{split}
\end{equation}

From the system (\ref{sistem1}) and from the system obtained in (\ref{sistem2}) we get:

\begin{equation} \label{sistem3}
\begin{split}
0 = (a_{11} - a_{22})\theta_1 + (a_{12} - a_{21})\theta_2 + (a_{13} - a_{23})\theta_3 \\
0 = (a_{12} - a_{21})\theta_1 + (a_{11} - a_{22})\theta_2 + (a_{13} - a_{23})\theta_3  \\
0 = (a_{31} - a_{32})\theta_1 + (a_{32} - a_{31})\theta_2
\end{split}
\end{equation}

Since $< \theta_1,\theta_2,\theta_3 >$ is a free abelian group, then from the system in (\ref{sistem3}) we have:

$a_{11} = a_{22}, \quad a_{12} = a_{21}, \quad a_{13} = a_{23}, \quad a_{31} = a_{32}$

Then the system (\ref{sistem1}) can be rewritten as

\begin{equation} \label{sistem4}
\begin{split}
\lambda_1 = a_1\theta_1 + a_2\theta_2 + a_3\theta_3 \\
\lambda_2 = a_2\theta_1 + a_1\theta_2 + a_3\theta_3  \\
\lambda_3 = a_4\theta_1 + a_4\theta_2 + a_5\theta_3 \\
\end{split}
\end{equation}

\noindent where $a_1=a_{11}$, $a_2=a_{12}$, $a_3=a_{23}$, $a_4=a_{31}$ and $a_5=a_{33}$.
Applying the homomorphism $\Phi$ to the system (\ref{sistem4}) we obtain:

\begin{equation} \label{sistem5}
\begin{split}
\delta_1 =\Phi(\lambda_1)=\Phi(a_1\theta_1 +a_2\theta_2 +a_3\theta_3) = (a_1 +a_2)\beta_1 +2a_3\beta_2     \\
\delta_1 = \Phi(\lambda_2) = \Phi(a_2\theta_1 + a_1\theta_2 + a_3\theta_3) = (a_2 + a_1)\beta_1 + 2a_3\beta_2  \\ 
2\delta_2 = \Phi(\lambda_3) = \Phi(a_4\theta_1 + a_4\theta_2 + a_5\theta_3) = 2a_4\beta_1 + 2a_5\beta_2
\end{split}
\end{equation}

By properties of free abelian groups, we obtain from the last equation of the system (\ref{sistem5}) that:
$$\delta_2 = a_4\beta_1 + a_5\beta_2$$. 

So the system in (\ref{sistem5}) can be rewritten as:

\begin{equation} \label{sistem6}
\begin{split}
\delta_1 = (a_1 + a_2)\beta_1 + 2a_3\beta_2.  \\ 
\delta_2 = a_4\beta_1 + a_5\beta_2
\end{split}
\end{equation}

From the system (\ref{sistem6}) we see that the matrix $A$, given by:

$$A = \begin{pmatrix} a_1+a_2  & 2a_3 \\ a_4  & a_5 \end{pmatrix}$$
is the representation matrix of the group $ G_2 = < \beta_1, \beta_2 : \delta_1, \delta_2 >$. From the system
in (\ref{sistem4}), doing an operation on rows, we see that the matrix $\tilde A$, given by:

$$\tilde A= \begin{pmatrix} a_1 & a_2 & a_3 \\ a_1+a_2 & a_1+a_2 & 2a_3 \\ a_4 & a_4 & a_5 \end{pmatrix}$$
is a representation matrix of the group $G_1$.

By Smith Normal Form Theorem, there exists matrices $S_1$ and $S_2$ of order $2 \times 2$, invertible and with integer entries such 
that the matrix $S_1AS_2$ is a diagonal matrix with integer entries. From Smith Normal Form Theorem it is also known that the 
inverse matrices of $S_1$ and $S_2$ have integer entries, therefore $det S_1 = \pm 1$ and $det S_2 = \pm 1$. Now, since $G_2$ is the trivial 
group, then $det A = \pm 1$. So the matrix $S_1AS_2$ is of the form 

\begin{equation}\label{sistem7} 
S_1AS_2 = \begin{pmatrix} \pm 1 & 0 \\ 0 & \pm 1 \end{pmatrix}
\end{equation}

From (\ref{sistem7}) we can ensure that there is a matrix $S$ of order $2 \times 2$, invertible and
with integer entries that satisfies:

\begin{equation}\label{sistem8} 
SA= \begin{pmatrix} 1 & 0 \\ 0 & 1 \end{pmatrix}
\end{equation}

Let us define the following matrix:

\begin{equation*}
\tilde{S}=
\begin{pmatrix}
1 & \begin{matrix}
0 & 0
\end{matrix}\\
\begin{matrix}
0 \\
0
\end{matrix} & S
\end{pmatrix}
\end{equation*}

Clearly the matrix $\tilde S$ has integer entries and using the result in (\ref{sistem8}) we have:

\begin{equation}\label{sistem9} 
\tilde S \tilde A= \begin{pmatrix} a_1 & a_2 &  a_3 \\ 1 & 1 &  0 \\ 0 & 0 & 1 \end{pmatrix}
\end{equation}

Using elementary operations, from the matrix in (\ref{sistem9}) we obtain:

$$\begin{pmatrix} a_1-a_2 & 0 & 0 \\ 0 & 1 &  0 \\ 0 & 0 & 1 \end{pmatrix}$$

From the above matrix we conclude that the group $< \theta_1, \theta_2, \theta_3 : \lambda_1, \lambda_2, \lambda_3 >$ is
isomorphic to $Z_{a_1-a_2}$, therefore the group $G_1$ is isomorphic to $Z_{a_1-a_2}$. \end{proof}

The following result is well known to experts. We include a proof for completeness and because it
will help us as a lemma in the proof of Theorem \ref{cubiertatransito}.

\begin{theorem} \label{cubiertatunel} If $K$ is a knot in $S^3$ such that $t(K) = 1$, then the first homology group of the double branched cover of $K$ is cyclic. \end{theorem}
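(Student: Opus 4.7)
The plan is to run the construction of Theorem \ref{cubiertageneral} with $n=1$, taking the tunnel $\tau$ for $K$ as the single (transient) arc, and then to leverage the fact that $Ext(T)$ is \emph{also} a handlebody in order to choose the generators of $H_1(\Sigma[Ext(T)])$ symmetrically with respect to the deck involution, so that Lemma \ref{grupos} applies verbatim. Setting $T = \mathcal{N}(K\cup\tau)$, both $T$ and $Ext(T)$ are genus-$2$ handlebodies. The construction produces a trivial knot $K' \subset T$ homotopic to $K$ in $T$, compressing disks $D_1 \subset \mathcal{N}(\tau)$ and $D_2 \subset \mathcal{N}(K)$, and a presentation
\[
H_1(\Sigma[K]) \;=\; \langle\, \theta_1, \theta_2, \theta_3 \mid \lambda_1, \lambda_2, \lambda_3 \,\rangle
\]
in which $H_1(\Sigma[Ext(T)]) \cong \mathbb{Z}^3$ is freely generated by $\theta_1, \theta_2, \theta_3$, while $\lambda_1, \lambda_2$ are the classes of the two lifts $\Lambda_1^1, \Lambda_2^1$ of $\partial D_1$ and $\lambda_3$ is the class of the connected lift $\Lambda$ of $\partial D_2$.

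The crucial new step is to choose the $\theta_i$ so that the precise hypotheses of Lemma \ref{grupos} are manifest. Because $Ext(T)$ is a genus-$2$ handlebody, it has a spine $\Gamma = c_1 \vee c_2$, and the loops $c_1, c_2$ may be chosen so that their classes realize any prescribed basis of $H_1(Ext(T)) = \mathbb{Z}^2$. The branched cover $\Sigma[K'] \to S^3$ restricts over $Ext(T)$ to the connected double cover classified by the mod-$2$ linking homomorphism $\gamma \mapsto lk(\gamma, K')\bmod 2$; this is surjective because a meridian of $K$ lies in $Ext(T)$ and links $K'$ once (as $[K']=[K]$ in $H_1(T)$). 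Accordingly, select $c_1, c_2$ so that $lk(c_1, K')$ is odd and $lk(c_2, K')$ is even. Then $c_1$ lifts to a single loop $\tilde c_1$ covering $c_1$ twice, while $c_2$ lifts to two disjoint loops $f_1, f_2$ interchanged by the deck involution $\sigma$. A direct spine calculation in $\tilde\Gamma := p^{-1}(\Gamma)$ shows that $\{[f_1], [f_2], [\tilde c_1]\}$ is a free basis of $H_1(\Sigma[Ext(T)])$; set $\theta_1 := [f_1]$, $\theta_2 := [f_2]$, $\theta_3 := [\tilde c_1]$ and $\Psi := \sigma_*$. Then $\Psi(\theta_1) = \theta_2$, $\Psi(\theta_2) = \theta_1$, $\Psi(\theta_3) = \theta_3$, and the same symmetry of $\sigma$ gives $\Psi(\lambda_1) = \lambda_2$, $\Psi(\lambda_2) = \lambda_1$, $\Psi(\lambda_3) = \lambda_3$ (on the circle $\Lambda$ the deck involution is fixed-point-free, hence rotation by $\pi$, hence orientation-preserving).

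For the second homomorphism of Lemma \ref{grupos}, let $\Phi$ be the map $H_1(\Sigma[Ext(T)]) \to H_1(Ext(T))$ induced by $p$, and set $\beta_1 := [c_2]$, $\beta_2 := [c_1]$. By construction $\Phi(\theta_1) = \Phi(\theta_2) = \beta_1$ and $\Phi(\theta_3) = 2\beta_2$ (because $\tilde c_1$ double-covers $c_1$), while $\Phi(\lambda_1) = \Phi(\lambda_2) = [\partial D_1] =: \delta_1$ and $\Phi(\lambda_3) = 2[\partial D_2] =: 2\delta_2$. The group $G_2 := \langle \beta_1, \beta_2 \mid \delta_1, \delta_2\rangle$ is precisely $H_1(Ext(T) \cup \mathcal{N}(D_1) \cup \mathcal{N}(D_2))$: attaching $\mathcal{N}(D_1)$ to $Ext(T)$ recovers $Ext(K)$, and attaching $\mathcal{N}(D_2)$ afterwards fills in a meridian of $K$, so the result is a $3$-ball and $G_2 = 0$. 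All hypotheses of Lemma \ref{grupos} are met, and its conclusion gives $H_1(\Sigma[K]) \cong \mathbb{Z}_{x-y}$ for some integers $x,y$, which is cyclic. The delicate point is the construction of the symmetric basis $\theta_1, \theta_2, \theta_3$: realizing a prescribed basis of $H_1(Ext(T))$ by an embedded spine genuinely needs the handlebody structure of $Ext(T)$, the very feature that $t(K)=1$ supplies and that will have to be replaced by a subtler argument in the proof of Theorem \ref{icubiertatransito}.
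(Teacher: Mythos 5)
Your proof is correct, and its skeleton is the same as the paper's: run the construction of Theorem \ref{cubiertageneral} with $n=1$ to present $H_1(\Sigma[K])$ as $H_1(p^{-1}(Ext(T)))$ modulo the classes of the lifts of $\partial D_1$ and $\partial D_2$, produce a basis of $H_1(p^{-1}(Ext(T)))$ that is symmetric under the deck involution and compatible under $p_*$ with a basis of $H_1(Ext(T))$, check that $G_2=\langle \beta_1,\beta_2 : \delta_1,\delta_2\rangle$ is trivial, and apply Lemma \ref{grupos}. The one place you genuinely diverge is the construction of the symmetric basis, which is the content of the paper's Claim \ref{curvascubiertas}: the paper cuts the genus two handlebody $Ext(T)$ along a separating disk into two solid tori and produces the curves $B_1,B_2$ by a two-case analysis according to which solid tori lift connectedly, whereas you observe that the restricted cover is classified by the mod-$2$ linking homomorphism $\omega(\cdot)=lk(\cdot,K')\bmod 2$ (surjective since $\omega(\partial D_2)=1$), choose a basis of $H_1(Ext(T))$ with $\omega$-values $(1,0)$, realize it by a spine, and read the symmetric basis $\{[f_1],[f_2],[\tilde c_1]\}$ off the lifted graph. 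Your route avoids the case analysis, makes the parity structure explicit, and also identifies $G_2$ cleanly as $H_1$ of the $3$-ball $Ext(T)\cup\mathcal{N}(D_1)\cup\mathcal{N}(D_2)$ (the paper is terse on this point); the cost is that you invoke, without proof, the fact that an arbitrary basis of $H_1$ of a handlebody can be realized by an embedded spine --- true, since handle slides realize Nielsen transformations and these surject onto $GL_2(\mathbb{Z})$, but it deserves a sentence --- while the paper's disk-and-cores argument is more elementary and self-contained.
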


\begin{proof} Let $K$ be a knot in $S^3$ such that $t(K) = 1$, and let ${\tau}$ be an unknotting tunnel for $K$. 
Let $T = \mathcal{N}(K \cup \tau)$ and 
$Ext(T ) = S^3 \backslash T$, so $Ext(T)$ is a genus two handlebody.
Since $Ext(T)$ is a handlebody, we can ensure that there exists a knot $K' \subset T$ such that $K'$ is a trivial knot in 
$S^3$ and it is homotopic with the knot $K$ in $T$. Let $\Sigma(K')$ be the double branched cover of the 
knot $K'$ and let $p : \Sigma(K') \rightarrow S^3$ be the associated covering function.
It is easy to notice, for the way it is defined $T$,  that there are meridian disks $D_1$ and $D_2$ in $T$ such that 
$\vert D_1 \cap K\vert = 0$ and 
$\vert D_2 \cap K\vert = 1$. Since $K'$ is homotopic to $K$ in $T$, then $\vert D_1 \cap K' \vert$ is an even integer and 
$\vert D_2 \cap K'\vert$ is an odd integer. Therefore $\partial D_1$ lifts, under $p$, in two simple closed curves; while 
$\partial D_2$ lifts to exactly a single simple closed curve. Let us denote by $\Lambda_1$ and 
$\Lambda_2$ the liftings of $\partial D_1$ and by $\Lambda_3$ the lifting of $\partial D_2$.
For each $i \in \{1,2,3\}$ we attach a 2-handle to $p^{-1}(Ext(T))$ along
$\Lambda_i \subset \partial(p^{-1}(Ext(T)))$; let us denote the 2-handle attached along $\Lambda_i$ by 
$\overline{\Lambda}_i$. Let $\Sigma$ be the 3-manifold obtained by attaching to $p^{-1}(Ext(T))$ the 2-handles 
$\overline{\Lambda}_i$, 
that is: $\Sigma := p^{-1}(Ext(T))\cup (\cup_{i=1}^3\overline{\Lambda}_i)$.

Let us note the following observations: 
\begin{enumerate}

\item $\partial p^{-1}(Ext(T))$ is a genus three connected surface.
\item $p^{-1}(Ext(T))$ is a double covering of $Ext(T)$. %with covering function given by $p\vert_1{p^{-1} (Ext(T ))}$.
\item The function $p$ can be extended to $\Sigma$, such that $\overline{\Lambda}_1 \cup \overline{\Lambda}_2$ is a double 
covering of $\mathcal{N}(D_1)$ and $\Lambda_3$ is a double covering of $\mathcal{N}(D_2)$ branched along 
$K \cap \mathcal{N}(D_2)$.
\item $\partial\Sigma$ is a 2-sphere.

\end{enumerate}

Let $\Sigma(K)$ be the 3-manifold obtained by attaching a 3-ball to $\Sigma$ along its
boundary. So, we can extend the covering function $p\vert_ {p^{-1}(Ext(T))} : p^{-1}(Ext(T)) \rightarrow Ext(T)$ to a covering 
function $p': \Sigma(K) \rightarrow S^3$ which branches along the knot $K$. 
Therefore $\Sigma (K)$ is the double covering of 
$S^3$ branched along $K$ with covering function given by $p'$.

We know that $Ext(T)$ is a genus two handlebody, therefore $H_1(Ext(T))$ is a free abelian group in two generators. 
Note that $\partial(p^{-1}(Ext(T)))$ is a genus three handlebody, therefore $H_1(p^{-1}(Ext(T)))$ is a free abelian group in three generators.

\begin{claim}\label{curvascubiertas} There are two connected simple closed curves in $Ext(T)$, 
denoted by $B_1$ and $B_2$, 
such that $B_1$ lifts, by $p$, in two  closed and connected simple curves, denoted by $\Theta_1$ 
and $\Theta_2$; while $B_2$ lifts, by $p$, in exactly one simple curve closed, denoted 
by $\Theta_3$. If $\beta_j$ is the homology class of $B_j$ in $H_1(Ext(T))$ and $\theta_i$ is the homology 
class of $\Theta_i$ in $H_1(p^{-1}(Ext(T)))$ for all $j \in \{1,2\}$ and $i \in \{1,2,3\}$, then
$H_1(Ext(T))=<\beta_1,\beta_2 >$ , $H_1(p^{-1}(Ext(T)))=<\theta_1,\theta_2,\theta_3 >$.
 \end{claim}

\begin{proof} Note that $Ext(T)$ is a genus two handlebody, call it $V$. Let $D$ be a disk in $V$ which splits it
in two solid tori $V_1$ and $V_2$. Note that $p^{-1}(V_i)$ double covers $V_i$, then it is either a set of
two solid tori or a solid torus that coves $V_i$ two-to-one. There are two possibilities.

\begin{enumerate} 

\item $V_1$ is covered by two solid tori, say $V^1_1$ and $V^2_1$, and $V_2$ is covered two-to-one by a solid torus $V_2'$. See Figure

\item $V_1$ and $V_2$ are covered both two-to-one  by solid tori $V_1'$ and $V_2'$. See Figure

\end{enumerate}

In Case 1, take as $B_i$, $i=1,2$, a core of the solid tori $V_i$. Clearly $B_1$ lifts to two simple closed curves
$\Theta_1$ and $\Theta_2$, which are a core of the solid tori $V^1_1$ and $V^2_1$, and $B_2$ lifts to
a simple closed curve $\Theta _3$ which is a core of the solid tori $V_2'$, and which cover two-to-one the
curve $B_2$. In this case it is clear that the homology classes of the curves satisfy the required properties.
See Figure \ref{handlebody1}.

\begin{figure}

% \begin{center}
\includegraphics[angle=0, width=8true cm]{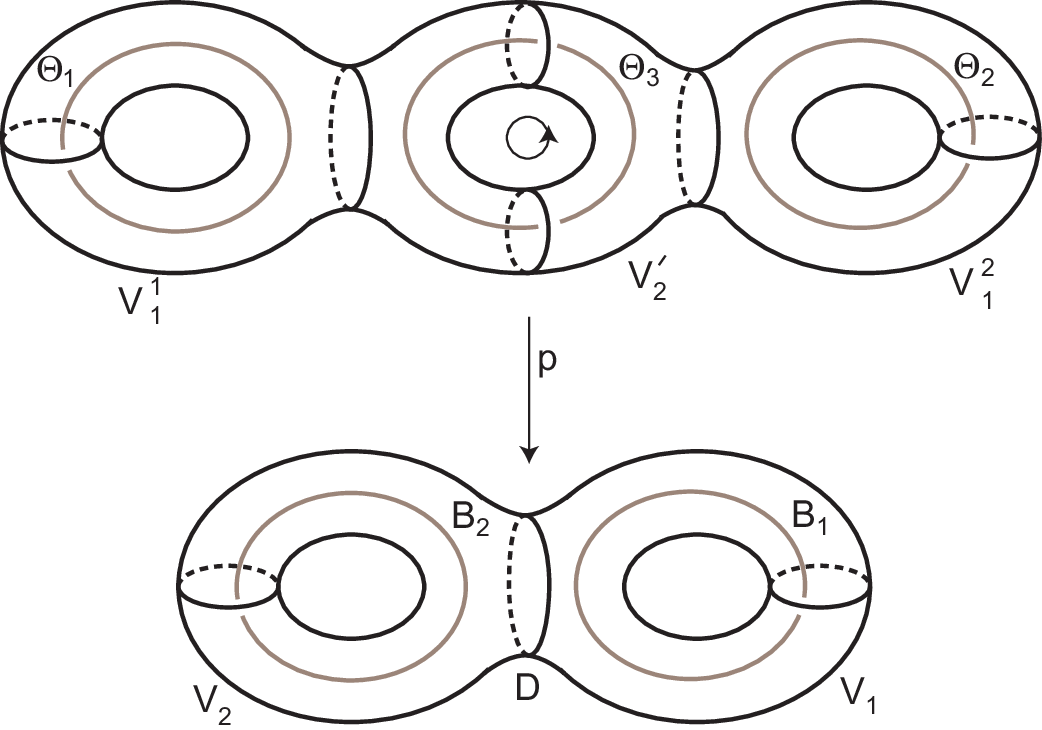}

\caption{}
\label{handlebody1}
%\end{center}

\end{figure}

In Case 2, take as $B_1$ a curve that goes once around each of the cores of $V_1$ and $V_2$ and intersects
$D$ in two points. In this case $B_1$ lifts to two simple closed curves $\Theta_1$ and $\Theta_2$, each of
which goes once around $V^1_1$ and $V^2_1$. Take as $B_2$ a core of $V_1$, then clearly it lifts to a 
curve $\Theta_3$ which covers $B_2$ two-to-one. It is clear that the homology classes of the curves satisfy the required properties. See Figure \ref{handlebody2}.
\end{proof}

\begin{figure}

% \begin{center}
\includegraphics[angle=0, width=8true cm]{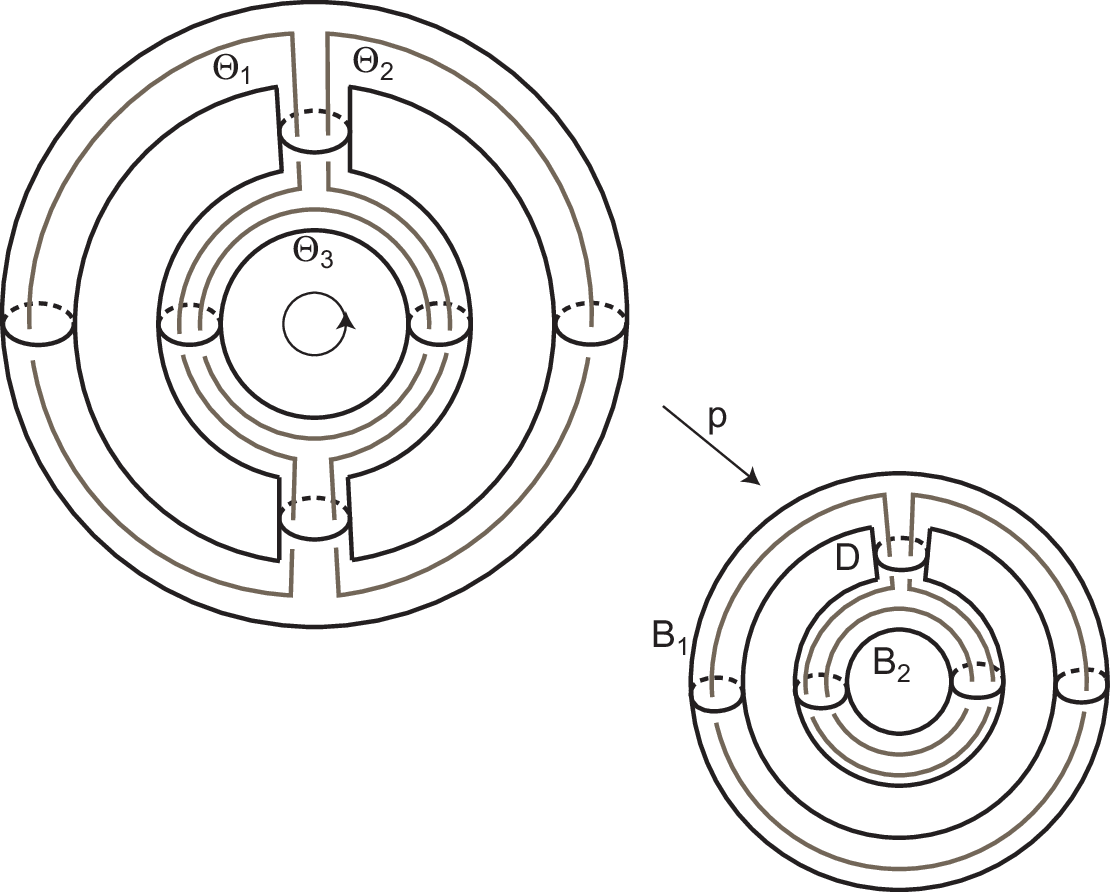}

\caption{}
\label{handlebody2}
%\end{center}

\end{figure}

We know that $p^{-1}(Ext(T))$ is a double covering of $Ext(T)$, with covering function given by the restriction of $p$. Let 
$p_*: H_1(p^{-1}(Ext(T))) \rightarrow H_1(Ext(T))$ be the
homomorphism associated with the restriction of $p$. For each $i \in \{1, 2, 3\}$, let us denote
by $\lambda_i$ the homology class in $H_1(p^{-1}(Ext(T)))$ associated to the curve $\Lambda_i$. 
Note that $H_1(\Sigma[K])=<\theta_1,\theta_2,\theta_3: \lambda_1,\lambda_2,\lambda_3>$. For
each $j \in \{1, 2\}$, let us denote by $\delta_j$ the homology class in $H_1(Ext(T))$ associated to the curve $\partial D_j$.
We have that $H_1(Ext(T))=<\beta_1,\beta_2: \delta_1, \delta_2 >$.
By choosing orientations conveniently, assume that 

\begin{equation} \label{sistem10}
\begin{split}
 p_*(\lambda_1) = \delta_1, \,\,\, p_*(\lambda_2) = \delta_1, \,\,\, p_*(\lambda_3) = 2\delta_2
\end{split}
\end{equation}

According to Claim \ref{curvascubiertas}, we have that

\begin{equation} \label{sistem11}
\begin{split}
p_*(\theta_1)= \beta_1, \,\,\, p_*(\theta_2)=\beta_1, \,\,\, p_*(\theta_3)=2\beta_2
\end{split}
\end{equation}

Let $q:p^{-1}(Ext(T)) \rightarrow p^{-1}(Ext(T))$ be the non-trivial covering transformation associated to the covering function $p\vert_{p^{-1}(Ext(T ))}$. Let 
$q_*:H_1(p^{-1}(Ext(T))) \rightarrow H_1(p^{-1}(Ext(T)))$ be  the homomorphism induced by the covering transformation $q$. By Claim \ref{curvascubiertas} we have that

\begin{equation} \label{sistem12}
\begin{split}
q_*(\theta_1) = \theta_2, \,\,\, q_*(\theta_2) = \theta_1, \,\,\, q_*(\theta_3) = \theta_3 \\
q_*(\lambda_1) = \lambda_2, \,\,\, q_*(\lambda_2) = \lambda_1, \,\,\, q_*(\lambda_3) = \lambda_3
\end{split}
\end{equation}

Then, applying Lemma \ref{grupos} directly we have that $H_1(\Sigma_2(K)) = Z_{x-y}$,
where $\lambda_1 = x\theta_1 + y\theta_2 + z\theta_3$.
\end{proof}

Now we prove the main result of this paper.

\begin{theorem} \label{cubiertatransito} If $K$ is a knot in $S^3$ such that $tr(K) = 1$ then the first homology group of the double branched cover of $K$ is cyclic. \end{theorem}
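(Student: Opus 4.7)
The plan is to adapt the proof of Theorem~\ref{cubiertatunel} to the transient case, where the main new difficulty is that $Ext(T)$ need not be a handlebody. Applying Theorem~\ref{cubiertageneral} with $n=1$ produces, for $T=\mathcal{N}(K\cup\tau)$ the genus~$2$ handlebody coming from a single transient arc $\tau$, the presentation
$$H_1(\Sigma[K]) = \langle \theta_1,\theta_2,\theta_3 : \lambda_1,\lambda_2,\lambda_3\rangle,$$
in which the $\theta_i$ freely generate $M := H_1(p^{-1}(Ext(T)))$, the relations $\lambda_1,\lambda_2$ are the classes of the two lifts of $\partial D_1$, and $\lambda_3$ is the class of the connected lift $\Lambda$ of $\partial D_2$. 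The ultimate goal is to apply Lemma~\ref{grupos}, which requires a basis $(\theta_1,\theta_2,\theta_3)$ of $M$ on which the nontrivial deck transformation $q$ of $p^{-1}(Ext(T))\to Ext(T)$ acts by $q_*(\theta_1)=\theta_2$, $q_*(\theta_2)=\theta_1$, $q_*(\theta_3)=\theta_3$, together with generators $\beta_1,\beta_2$ of $H_1(Ext(T))$ satisfying $p_*(\theta_1)=p_*(\theta_2)=\beta_1$ and $p_*(\theta_3)=2\beta_2$; the parallel action of $q_*$ on the relations is automatic since $q$ swaps the two lifts of $\partial D_1$ and acts as the orientation-preserving involution on the connected double cover $\Lambda\to\partial D_2$.

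To produce the basis in place of Claim~\ref{curvascubiertas}, which used the handlebody structure of $Ext(T)$, I would argue algebraically via the classification of $\mathbb{Z}[\mathbb{Z}/2]$-lattices: the indecomposable summands are only $\mathbb{Z}$ (trivial), $\mathbb{Z}^-$ (sign), and $\mathbb{Z}[\mathbb{Z}/2]$ (regular). Both $M$ and $H_1(Ext(T))$ are torsion-free of ranks $3$ and $2$, because each space embeds in $S^3$ with connected closed boundary, and the Mayer--Vietoris sequence of the induced decomposition of $S^3$ realises each side's $H_1$ as a direct summand of the boundary's free abelian $H_1$. Ruling out $\mathbb{Z}^-$ summands---equivalently, showing that the coinvariants $M_{\mathbb{Z}/2}$ are torsion-free---is the key module-theoretic step, and can be carried out by analyzing the map $M_{\mathbb{Z}/2}\to H_1(Ext(T))$ induced by $p_*$, using that $H_1(Ext(T))$ is torsion-free together with the Cartan--Leray edge-map computation. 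Combined with the rank conditions $a+b+2c=3$ and $a+c=2$ (the invariant rank over $\mathbb{Q}$ agreeing with $\mathrm{rank}(H_1(Ext(T)))$ by transfer), the absence of sign summands forces $M\cong\mathbb{Z}[\mathbb{Z}/2]\oplus\mathbb{Z}$, from which a basis with the desired $q_*$-action is extracted.

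With the basis in hand, I would set $\beta_1 := p_*(\theta_1)$ and select $\theta_3$ so that $p_*(\theta_3) = 2\beta_2$ with $\beta_2 := [\partial D_2]$; the geometric representative $\lambda_3=[\Lambda]$ itself satisfies $p_*(\lambda_3)=2[\partial D_2]$, which provides a natural candidate for the $\mathbb{Z}$-summand's generator up to adjustment by the $\mathbb{Z}[\mathbb{Z}/2]$-summand. Then $\delta_1:=p_*(\lambda_1)=[\partial D_1]$ and $\delta_2:=[\partial D_2]$ generate $H_1(Ext(T))$, because attaching $2$-handles to $Ext(T)$ along $\partial D_1$ and $\partial D_2$ produces $S^3$ minus a ball, whose first homology vanishes; thus the group $G_2=\langle \beta_1,\beta_2:\delta_1,\delta_2\rangle$ of Lemma~\ref{grupos} is trivial. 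Applying Lemma~\ref{grupos} with $\Psi=q_*$ and $\Phi=p_*$ then yields $H_1(\Sigma[K])\cong\mathbb{Z}_{x-y}$, which is cyclic.

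The main obstacle is the second paragraph: without the handlebody structure on $Ext(T)$ the basis cannot be read off from explicit cores as in Claim~\ref{curvascubiertas}, and the $\mathbb{Z}[\mathbb{Z}/2]$-lattice classification must be invoked instead. A related subtlety, sitting between paragraphs two and three, is arranging the fixed generator $\theta_3$ so that $p_*(\theta_3)$ is \emph{exactly} twice a basis element of $H_1(Ext(T))$ rather than an odd linear combination; the cleanest way around this is probably to use the topological class $\lambda_3$ as a canonical candidate, since $p_*(\lambda_3)=2[\partial D_2]$ tautologically supplies the required divisibility and identifies $\beta_2$ with $[\partial D_2]$.
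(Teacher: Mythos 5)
Your plan takes a genuinely different route from the paper's proof. The paper argues geometrically and splits into cases according to where the compressing disk $E_1$ for $\partial T$ in the exterior of the trivial knot lies. When $E_1\subset T$, the paper replaces $K'$ by the core $K''$ of the solid torus obtained by cutting $T$ along $E_1$; then $p^{-1}(T)$ is a genus three handlebody in $\Sigma[K'']=S^3$, and the boundaries of its lifted meridian disks give explicit curves $\Theta_1,\Theta_2,\Theta_3$ playing the role of Claim \ref{curvascubiertas}, after which Lemma \ref{grupos} applies. When $E_1\subset Ext(T)$, the paper argues completely differently: $K$ has winding number zero in the solid torus $T\cup\mathcal{N}(E_1)$, so by Seifert's theorem $H_1(\Sigma[K])\cong H_1(\Sigma[\bar K])$ for the re-embedded pattern $\bar K$, which is a tunnel number one knot, and Theorem \ref{cubiertatunel} finishes. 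You replace both cases by a single algebraic argument on the $\mathbb{Z}[\mathbb{Z}/2]$-lattice $M=H_1(p^{-1}(Ext(T)))$: since $H_2(\mathbb{Z}/2;\mathbb{Z})=0$, the five-term Cartan--Leray sequence gives an injection $M_{\mathbb{Z}/2}\hookrightarrow H_1(Ext(T))$ with cokernel $\mathbb{Z}/2$, so the coinvariants are torsion free (this is exactly where torsion-freeness of $H_1(Ext(T))$, i.e. the embedding in $S^3$, is used); this kills the sign summands $\mathbb{Z}^-$, and the rank count ($a+2c=3$ over $\mathbb{Z}$, $a+c=2$ for the invariant rank, by transfer) forces $M\cong\mathbb{Z}[\mathbb{Z}/2]\oplus\mathbb{Z}$. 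This is a real structural gain: the argument never asks whether $\partial T$ compresses inside or outside $T$, so the satellite/re-embedding half of the paper's proof disappears entirely.

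There is, however, a gap in your final step as written. Lemma \ref{grupos} needs more than the $q_*$-adapted basis: it needs a free basis $(\beta_1,\beta_2)$ of $H_1(Ext(T))$ with $p_*\theta_1=p_*\theta_2=\beta_1$ and $p_*\theta_3=2\beta_2$. Your proposed resolution, taking the relation class $\lambda_3=[\Lambda]$ as a candidate for the generator of the $\mathbb{Z}$-summand ``up to adjustment by the $\mathbb{Z}[\mathbb{Z}/2]$-summand'', does not work: $\lambda_3$ is a relation, not a generator; in the notation of Lemma \ref{grupos} it has the form $a_4(\theta_1+\theta_2)+a_5\theta_3$, and nothing prevents $a_5$ from being $0$ or $\pm 7$, in which case no adjustment by elements of the regular summand makes it primitive, let alone a complement of $\mathbb{Z}[\mathbb{Z}/2]\theta_1$. (Also, the worry that motivated this is vacuous: Lemma \ref{grupos} never requires $\beta_2=[\partial D_2]$; it only requires $\delta_2=[\partial D_2]$, and triviality of $\langle\beta_1,\beta_2:\delta_1,\delta_2\rangle$ is basis independent.) The gap is repairable, and in fact the required condition holds for \emph{every} $q_*$-adapted basis. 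Indeed, the five-term sequence shows that $(p_*\theta_1,p_*\theta_3)$ is a basis of the index two sublattice $\mathrm{im}(p_*)=\ker\phi$, where $\phi$ is mod $2$ linking with the branch knot; the transfer homomorphism $\mu:H_1(Ext(T))\to M$ satisfies $p_*\circ\mu=2$ and $\mathrm{im}(\mu)\subseteq M^{\mathbb{Z}/2}=\mathbb{Z}(\theta_1+\theta_2)\oplus\mathbb{Z}\theta_3$, whence $2H_1(Ext(T))\subseteq 2\mathbb{Z}p_*\theta_1+\mathbb{Z}p_*\theta_3$; both of these subgroups have index two in $\ker\phi$, hence they are equal, which gives $p_*\theta_3=2\beta_2$ with $H_1(Ext(T))=\mathbb{Z}p_*\theta_1\oplus\mathbb{Z}\beta_2$. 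With this substitution for your last paragraph, your plan goes through and yields a uniform proof of the theorem.
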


\begin{proof} Let $K$ be a knot in $S^3$ such that $tr(K) = 1$, and let $\{\tau \}$ be a transient system for the 
knot $K$. Let $T = \mathcal{N}(K \cup \tau)$ and let $K' \subset T$ be a trivial knot in $S^3$ such that $K'$ is 
homotopic to $K$ in $T$. Define also the 3-manifold $Ext(T)$ as $Ext(T) := S^3 \backslash Int(T)$.

As $\partial T$ is a genus two surface in the exterior of the knot $K'$, which is trivial, it follows that $\partial T$
is compressible in $Ext(K')$, that is, there is a compression disk $E_1$ for $\partial T$ disjoint from $K'$.

There are two possibilities for the disk $E_1$, that is: 
\begin{enumerate}

\item The disk $E_1$ is a compression disk for $\partial T$ lying in the interior of $T$; 
\item The disk $E_1$ is a compression disk for $\partial T$ lying in the exterior of $T$.
\end{enumerate}

Suppose first that we have case (1), that is, $E_1$ lies in the interior of $T$. If $E_1$ separates $T$, then by
cutting along $E_1$ we get two solid tori, one of them contains $K'$, and then there is a compression disk in the
other solid tori which is non-separating in $T$. So, we can assume that there is a compression disk $E_1$
for $\partial T$, lying in $T$, and which does not separate $T$.
%Podemos asumir que el disco B1 no separa a T, ya que de lo contrario T \\mathcal{N}B1 tendr ́ıa dos componentes conexas y K' estar ́ıa contenido en solo una de estas componentes conexas. Luego tomando un disco de compresio ́n interior a la componente de T \ \mathcal{N} B1 que no contiene a K' vemos que este u ́ltimo disco sera ́ un disco de compresi ́on interior a T, disjunto de K' y que adema ́s no separa a T.

\begin{claim} There exist a knot $K''$ and a disk $E_2$ in $T$ such that:

\begin{enumerate}
\item $E_2$ is a compression disk for $\partial T$ which is properly embedded in $T$.
\item $K''$ is a trivial knot in $S^3$ and it is homotopic to $K$ in $T$.

%\item $\vert E_1 \cap K'' \vert = 0$.			
\item $\vert E_2 \cap K'' \vert = 1$.
\end{enumerate} \end{claim}

\begin{proof} By cutting $T$ along $E_1$, we get a solid torus $V$. The knot $K'$ lies in $V$, and as $K'$
represents a primitive element in $\pi_1(T)$, it must be 
homotopic to the core of $V$.
If $V$ is knotted, then $\partial V$ is incompressible in $Ext(K')$, which is not possible, for $K'$ is the trivial knot.
Then $V$ must be a standard solid torus in $S^3$. Then $K'$ can be further homotoped to the core of $V$,
which is a trivial knot in the 3-sphere. Then there is a disk $E_2$ in $V$ such that $\vert E_2 \cap K'' \vert = 1$.
\end{proof}

Let $\Sigma[K'']$ be the double cover of $S^3$ branched along $K''$ with covering function given by $p:\Sigma[K''] \rightarrow S^3$. The disk $E_1$ and $E_2$ form a meridian disk system for $T$, and as $K''$ is disjoint from $E_1$ and 
intersects $E_2$ in one point, it follows that $p^{-1}(T)$ is a genus 3 handlebody, $p^{-1}(E_1)$ consists of two disks and 
$p^{-1}(E_2)$ consists of a single disk which covers two-to-one the disk $E_2$. Note that these disks form a
 meridian system for $p^{-1}(T)$. Let $B_i=\partial E_i$, $i=1,2$. Denote by $\Theta_1$ and $\Theta_2$ the two
 components of $p^{-1}{B_1}$, and let $\Theta_3=p^{-1}(B_2)$. As $\Sigma[K'']$ is the 3-sphere, and $p^{-1}(T)$
 is a genus 3 handlebody, it follows that the homology clases of the curves $\Theta_i$, $i=1,2,3$, generate
 $H_1(p^{-1}(Ext(T))$.

Let $\{D_1,D_2\}$ compression disks in the interior of $T$ such that $D_1$ is properly embedded in $\mathcal{N}(\tau)$ y $D_2$ is properly embedded in $\mathcal{N}(K)$, such that $\vert D_1 \cap K \vert = 0$ and 
$\vert D_2 \cap K \vert=1$. Note that the disks
$D_1$ and  $D_2$ do not separate $T$. As $K''$ is homotopic to $K$ in $T$, then $\vert D_1 \cap K'' \vert$ is an even number and $\vert D_2 \cap K'' \vert$ is an odd number. Therefore $\partial D_1$ lifts, under $p$, in two simple 
closed curves, while  $\partial D_2$ lifts exactly in a single simple closed curve. Denote by 
$\Lambda_1$ y $\Lambda_2$ the liftings of $\partial D_1$
and by $\Lambda_3$ the lifting of $\partial D_2$.
Attach 2-handles to the 3-manifold $p^{-1}(Ext(T))$ along the curves $\Lambda_i$, note that these curves lie in 
$\partial (p^{-1}(Ext(T)))$, and denote the 2-handle attached along $\Lambda_i$ by $\overline \Lambda_i$. Let $\Sigma$ be the 3-manifold obtained by attaching to $p^{-1}(Ext(T))$ the 2-handles $\overline \Lambda_i$.

%$$\Sigma := p^{-1}(Ext(T))\bigcup_{i=1}^{3} \Lambda_i$$

Note that $p^{-1}(Ext(T))$ is a doble covering of $Ext(T)$, with covering function $p'$ given by 
$p'=p\vert_{p^{-1} (Ext(T ))}$. The function $p'$ can be extended to a function 
$p':\Sigma \rightarrow Ext(T) \cup N(D_1) \cup N(D_2)$, such that $\overline \Lambda_1 \cup \overline \Lambda_2$ is a double covering of $\mathcal{N}(D_1)$ y $\overline \Lambda_3$ is a double covering of $\mathcal{N}(D_2)$ 
branched along $K \cap \mathcal{N}(D_2)$.

Note that $\partial \Sigma$ is a 2-sphere. Let $\Sigma(K)$ be the 3-manifold obtained by attaching a 3-ball to $\Sigma$
along its boundary. We can extend the covering function $p'$ to a covering function 
$\hat p: \Sigma(K) \rightarrow S^3$, which branchs along $K$. Therefore $\Sigma(K)$ is the double cover of $S^3$
branched along $K$ with covering function given by $\hat p$.

As $p^{-1}(Ext(T))$ is a double covering of $Ext(T)$, with covering function given by the restriction of $p$, 
let $p_*: H_1(p^{-1}(Ext(T))) \rightarrow H_1(Ext(T))$ be the homomorphism induced by $p$. For each $i \in \{1,2,3\}$
denote by $\lambda_i$ the homology class in $H_1(p^{-1}(Ext(T)))$ associated to the curve $\Lambda_i$.
For each $j \in \{ 1,2\}$ denote by $\delta_j$ the homology class in $H_1(Ext(T))$ associated to the curve $\partial D_j$. 
Then

\begin{equation} \label{sistem13}
\begin{split}
p_*(\lambda_1) = \delta_1, \,\,\, p_*(\lambda_2) = \delta_1, \,\,\,p_*(\lambda_3) = 2\, \delta_2 
\end{split}
\end{equation}

Note that $H_1(Ext(T))$ is a free abelian group in two generators, generated by the homology classes of 
the curves $B_1$ and $B_2$, which we denote by $\beta_1$ and $\beta_2$. As we said before, 
$H_1(p^{-1}(Ext(T)))$ is a free abelian group in there generators, generated by the homology classes of the
curves $\Theta_i$, which we denote by $\theta_i$, $i=1,2,3$. We have that

\begin{equation} \label{sistem14}
\begin{split}
H_1(Ext(T))=<\beta_1,\beta_2 >, \,\, \, H_1(p^{-1}(Ext(T)))=<\theta_1,\theta_2,\theta_3 >
\end{split}
\end{equation}

We also obtain that 

\begin{equation} \label{sistem15}
\begin{split}
H_1(Ext(T))=<\beta_1,\beta_2: \delta_1, \delta_2 >, \,\, \, H_1(\Sigma[K])=<\theta_1,\theta_2,\theta_3; \lambda_1, \lambda_2, \lambda_3 >
\end{split}
\end{equation}

\begin{equation} \label{sistem16}
\begin{split}
p_*(\theta_1) = \beta_1, \,\,\, p_*(\theta_2) = \beta_1, \,\,\, p(\theta_3) = 2\beta_2 
\end{split}
\end{equation}

Let $q: p^{-1}(Ext(T)) \rightarrow p^{-1}(Ext(T))$ be the non-trivial covering transformation, associated to the covering function $p$. Let $q_* : H_1(p^{-1}(Ext(T))) \rightarrow H_1(p{-1}(Ext(T)))$ be the homomorphism associated to the covering transformation $q$. By the way that $\theta_i$ and the $\lambda_i$ were defined we have that:

\begin{equation} \label{sistem17}
\begin{split}
q_*(\theta_1) = \theta_2, \,\,\, q_*(\theta_2) = \theta_1, \,\,\, q_*(\theta_3) = \theta_3, \\
 q_*(\lambda_1) = \lambda_2, \,\,\, q_*(\lambda_2) = \lambda_1, \,\,\, q_*(\lambda_3) = \lambda_3
\end{split}
\end{equation}
Applying Lemma \ref{grupos} we have that $H_1(\Sigma(K)) = \mathbb{Z}_{x-y}$, where $\lambda_1 = x\theta_1 + y\theta_2 + z\theta_3$.
So, we have proved that if the compression disk $E_1$ is contained in $T$, then the homology group of 
the double branched cover of $K$ is cyclic.

Now suppose that the compression disk $E_1$ is contained in $Ext(T)$. In this situation we can suppose that $Ext(T)$
is not a handlebody, for otherwise we have that $t(K) = 1$ and by Theorem \ref{cubiertatunel} we get the desired result. Suppose first that
the disk $E_1$ does not separate $Ext(T)$. Define $\Gamma= T \cup \mathcal{N}(E_1)$. As $E_1$ does not
 divide $\partial T$ then $\partial \Gamma$ is a connected genus one surface, and it must bound a solid torus.
 Then $\Gamma$ is a solid torus, for otherwise $Ext(T)$ will be a genus 2 handlebody. So, $\Gamma$ is a knotted
 solid torus and $K'$ lies on it. As $K'$ is a trivial knot, it must lie in a 3-ball contained in $\Gamma$, for otherwise there will be an incompressible torus in $Ext(K)$. In particular, $K'$ has winding number zero in $\Gamma$.
 Then $K$ is also of winding number zero in $\Gamma$, as it is homotopic to $K'$ in $T\subset \Gamma$. 
 Embed $\Gamma$ in $S^3$ such that it is an standard solid torus $V$, and such that a preferred longitude
 of $\Gamma$ goes to a preferred longitude of $V$. Let $\bar K$ be the image of $K$ in $V$. Then $K$ is
 a satellite knot with pattern given by $\bar K$. As $\bar K$ has winding number zero in $V$, it follows that
 $H_1(\Sigma[\bar K])$ is isomorphic to  $H_1(\Sigma[K])$, by \cite{Sei}.  Let $\bar T$ be the image of $T$ in $V$,
 clearly $\bar T$ is the neighborhood of $\bar K$ union a transient arc, and the exterior of $\bar T$ is the exterior
 of $V$, which is a solid torus union a 1-handle given by the image of the disk $E_1$.
This shows $\bar K$ is a tunnel number one knot
 and then $H_1(\Sigma[\bar K])$ is a cyclic group, which implies then that $H_1(\Sigma[K])$ is also cyclic.
 
 Suppose now that the disk $E_1$ separates $Ext(T)$ and that there is no non-separating compression disk in $Ext(T)$.
Let $\Gamma= T \cup \mathcal{N}(E_1)$. As $E_1$ is separating, $\partial \Gamma$ consist of two tori, 
say $S_1$ and $S_2$. Then $S_1$ bounds a solid torus $V_1$ which contains $\Gamma$, and also contains $S_2$.
Then $V_1$ is a knotted solid torus, and as $K'$ is contained in $V_1$, it must lie inside a 3-ball, and then as
in the previous case, $K$ has winding number zero in $V_1$. Embed $V_1$ in $S^3$ such that it is an standard solid 
torus $V_2$, and such that a preferred longitude of $V_1$ goes to a preferred longitude of $V_2$. Let $\bar K$ be the
image of $K$ in $V_2$. Then $K$ is
 a satellite knot with pattern given by $\bar K$. As $\bar K$ has winding number zero in $V$, it follows that
 $H_1(\Sigma[\bar K])$ is isomorphic to  $H_1(\Sigma[K])$, by \cite{Sei}.  Let $\bar T$ be the image of $T$ in $V$,
 clearly $\bar T$ is the neighborhood of $\bar K$ union a transient arc, and the exterior of $\bar T$ is the exterior
 of $V$, which is a solid torus union a manifold bounded by the image of $S_2$ plus 1-handle given by the image 
 of the disk $E_1$. It follows that $\bar K$ is a transient number one knot
 such that the exterior of the knot union a transient arc is compressible, and it has a non-separating compression disk.
 By the previous case, $H_1(\Sigma[\bar K])$ is a cyclic group, which implies then that $H_1(\Sigma[K])$ is also cyclic.
 \end{proof}
 
% From  $T$ we construct a genus one handlebody, denoted by $\Gamma$, obtained by attaching handles (2-handles or 3-handles) to $T$; this is done oin the following way: if $B_1$ does not divide $\partial T>$ then define $\Gamma= T \cup \mathcal{N}(D_1)$, as $D_1$ does not divide $\partial T$ then $\partial \Gamma$ is a connected genus one surface. If $D_1$ divides $\partial T$ then $S^3 \ Int(T \cup \mathcal{N}(D_1))$ is a 3-manifold with two connected components and as $Ext(T)$ is not a handlebody then one of these components is not a handlebody. In this case we define $\Gamma$ as the 3-manifold obtained from $S^3$ from removing the interior of $S^3 \ Int(T \cup \mathcal{N}(D_1))$ the component which is not a handlebody. Note that in any case we have that $Ext(\Gamma)$ is not a handlebody. Let $C$ be the core of $\Gamma$, so that $\Gamma = \mathcal{N}(C)$. Now as $Ext(\Gamma)$ is not a handlebody then $C$ is not a trivial knot.\end{proof}

\section{Knots with large transent number}\label{examples}

By the results of the last section we can now estimate the transient number of some knots.

\begin{theorem}\label{largetransitnumber} Let $K$ be a knot such that its double branched cover is not an homology sphere, that is, $H_1(\Sigma[K])$ is not trivial. Then
\begin{enumerate}
\item $tr(K\# K) \geq 2$:
\item $tr(K_n) \geq (n-1)/2$, where $K_n = K\# K\# \cdots \# K$, is the connected sum of $n$ copies of $K$.
\end{enumerate}
\end{theorem}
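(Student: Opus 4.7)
The plan rests on the standard fact that the double branched cover of a connected sum splits as a connected sum: $\Sigma[K_1 \# K_2] \cong \Sigma[K_1] \# \Sigma[K_2]$, so that
\begin{equation*}
H_1(\Sigma[K_n]) \;\cong\; H_1(\Sigma[K])^{\oplus n}.
\end{equation*}
I would cite this at the outset (or give a one-line argument: cutting along the decomposing 2-sphere lifts to a 2-sphere in the cover along which the cover splits).

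For part (1), I would argue by contradiction. Since $H_1(\Sigma[K])\neq 0$, $K$ is nontrivial, hence so is $K\#K$, giving $tr(K\#K)\geq 1$. If $tr(K\#K)=1$, Theorem \ref{icubiertatransito} forces $H_1(\Sigma[K\#K])=H_1(\Sigma[K])\oplus H_1(\Sigma[K])$ to be cyclic. The short lemma I need is that for any nontrivial finitely generated abelian group $A$, $A\oplus A$ is not cyclic: pick a prime $p$ with $\dim_{\mathbb{F}_p}(A\otimes \mathbb{F}_p)\geq 1$ (such $p$ exists either from a torsion summand or, if $A$ is free, taking any $p$), and then $\dim_{\mathbb{F}_p}((A\oplus A)\otimes\mathbb{F}_p)\geq 2$, which rules out cyclicity. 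This contradiction gives $tr(K\#K)\geq 2$.

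For part (2), set $m=tr(K_n)$. By Theorem \ref{icubiertageneral}, $H_1(\Sigma[K_n])$ is generated by at most $2m+1$ elements. On the other hand, again choosing a prime $p$ with $\dim_{\mathbb{F}_p}(H_1(\Sigma[K])\otimes\mathbb{F}_p)\geq 1$, the splitting above yields
\begin{equation*}
\dim_{\mathbb{F}_p}\bigl(H_1(\Sigma[K_n])\otimes\mathbb{F}_p\bigr)\;\geq\; n,
\end{equation*}
so any generating set of $H_1(\Sigma[K_n])$ has at least $n$ elements. Combining, $n\leq 2m+1$, i.e.\ $m\geq (n-1)/2$, as required.

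I do not foresee a real obstacle: the only nonroutine input is the connected-sum formula for the branched cover (well-known and explicitly used in the paper's setting) plus the elementary $\mathbb{F}_p$-rank argument bounding the minimal number of generators of $A^{\oplus n}$. All the heavy lifting is already contained in Theorems \ref{icubiertageneral} and \ref{icubiertatransito}.
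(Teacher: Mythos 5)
Your proposal is correct and follows essentially the same route as the paper: both use the splitting $\Sigma[K_n]=\Sigma[K]\#\cdots\#\Sigma[K]$, then apply Theorem \ref{icubiertageneral} (generator bound $2\,tr+1$) for part (2) and Theorem \ref{icubiertatransito} ($tr=1$ implies cyclic) for part (1). Your $\mathbb{F}_p$-dimension lemma is simply a rigorous justification of the step the paper states loosely as ``$H_1$ has rank at least $n$,'' so there is nothing genuinely different to compare.
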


\begin{proof} it is known that $\Sigma[K_n]= \Sigma[K] \# \Sigma[K] \# \cdots \# \Sigma[K]$, the connected sum
of $n$ copies of $\Sigma[K]$. As $H_1(\Sigma[K])$ is not trivial, then $H_1(\Sigma[K])$ has rank at least
$n$. By Theorem \ref{cubiertageneral}, $tr(K_n) \geq (n-1)/2$, this shows (2). In particular 
$H_1(\Sigma[K_2])=H_1(\Sigma[K])+H_1(\Sigma[K])$, which is not cyclic, and this implies (1).
\end{proof}

This shows that there are knots with arbitrarily large transient number, which answers a question of 
Koda and Ozawa \cite{KO}.

Now we concentrate in the tables of knots up to crossing number 10.

\begin{theorem} 
\begin{enumerate}
\item The following knots have transient number $2$: $8_{18}$, $9_{35}$, $9_{37}$, $9_{40}$, $9_{41}$, $9_{46}$, $9_{47}$,
$9_{48}$, $9_{49}$, $10_{74}$, $10_{75}$, $10_{98}$, $10_{99}$, $10_{103}$, $10_{123}$, $10_{155}$, $10_{157}$.

\item The following knots have transient number at most $2$: $8_{16}$, $9_{29}$, $9_{32}$, $9_{38}$, $10_{61}$,
$10_{62}$, $10_{63}$, $10_{64}$, $10_{65}$, $10_{66}$, $10_{67}$, $10_{68}$, $10_{69}$, 
$10_{79}$, $10_{80}$, $10_{81}$, $10_{83}$, $10_{85}$, $10_{86}$, $10_{87}$, $10_{89}$,
$10_{90}$, $10_{92}$, $10_{93}$, $10_{94}$, $10_{96}$, $10_{97}$, $10_{100}$, $10_{101}$,
$10_{105}$, $10_{106}$, $10_{108}$, $10_{109}$, $10_{110}$, $10_{111}$, $10_{112}$, 
$10_{115}$, $10_{116}$, $10_{117}$, $10_{120}$, $10_{121}$, $10_{122}$, $10_{140}$, 
$10_{142}$, $10_{144}$, $10_{148}$, $10_{149}$, $10_{150}$, $10_{151}$, $10_{152}$, 
$10_{153}$, $10_{154}$, $10_{158}$, $10_{160}$, $10_{162}$, $10_{163}$, $10_{165}$.

\item Any other knot of crossing number at most 10 has transient number one.
\end{enumerate}
\end{theorem}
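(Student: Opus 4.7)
The plan is to combine three inputs: the contrapositive of Theorem \ref{icubiertatransito} (if $H_1(\Sigma[K])$ is not cyclic, then $tr(K)\geq 2$), the upper bounds $tr(K)\leq u(K)$ and $tr(K)\leq t(K)$ recalled in the introduction, and the tabulated values of $u(K)$, $t(K)$, and $H_1(\Sigma[K])$ recorded in KnotInfo. The proof will consist of matching the knots up to 10 crossings against these tables; the argument is thus more bookkeeping than theory, with the new ingredient being the lower bound supplied by Theorem \ref{icubiertatransito}.

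For part (3), the plan is to sweep through KnotInfo and collect all prime knots of crossing number at most $10$ that do not appear in the lists of (1) or (2). For every such knot, either $u(K)=1$ or $t(K)=1$ is recorded, so $tr(K)\leq 1$, and since $K$ is nontrivial we obtain $tr(K)=1$. In practice this amounts to verifying that the complement of the union of the two lists of (1) and (2) in the set of knots up to 10 crossings coincides exactly with the set of knots having $u=1$ or $t=1$, a finite table comparison.

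For the upper bound in parts (1) and (2), I would note that for each knot in both lists KnotInfo records $u(K)\leq 2$ or $t(K)\leq 2$, which yields $tr(K)\leq 2$ directly. A handful of cases, such as $8_{18}$ (where $t=2$ is classical), may need a short individual check. For the lower bound in part (1), I would look up the cyclic decomposition of $H_1(\Sigma[K])$ for each listed knot (computable from the Goeritz or Seifert matrix, or simply read off from KnotInfo), and verify in each instance that the group is not cyclic. The contrapositive of Theorem \ref{icubiertatransito} then yields $tr(K)\geq 2$, which combined with the upper bound gives $tr(K)=2$. For part (2), the listed knots have $H_1(\Sigma[K])$ cyclic, so Theorem \ref{icubiertatransito} does not rule out $tr(K)=1$; hence we can only assert $tr(K)\leq 2$.

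The main obstacle is less a conceptual one than one of organization: the proof requires a careful, exhaustive cross-check against KnotInfo, and in particular one must guarantee that the trichotomy (\emph{non-cyclic double cover homology with small $u$ or $t$}, versus \emph{cyclic homology with $u=2$ or $t=2$}, versus \emph{$u=1$ or $t=1$}) partitions the knots through 10 crossings exactly as stated. A secondary subtlety is that the method intrinsically cannot distinguish $tr=1$ from $tr=2$ when $H_1(\Sigma[K])$ is cyclic but $u(K),t(K)\geq 2$, which is why part (2) is stated only as an upper bound; settling these cases would require either a sharper lower bound (perhaps from higher branched covers as in Theorem \ref{icubiertageneral-p}) or an explicit construction of a single transient arc.
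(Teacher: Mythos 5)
Your proposal is correct and follows essentially the same route as the paper: both use the KnotInfo tables together with the bounds $tr(K)\leq u(K)$ and $tr(K)\leq t(K)$ for the upper estimates, and the contrapositive of Theorem \ref{icubiertatransito} (non-cyclic $H_1(\Sigma[K])$ forces $tr(K)\geq 2$) for the lower bound in part (1). The paper's proof is exactly this table cross-check, including the observation that the knots in lists (1) and (2) are precisely those with both $u$ and $t$ greater than one, and that the knots in (1) have tunnel number two and non-cyclic double branched cover homology.
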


\begin{proof} According to the information given in KnotInfo \cite{Knot}, the knots in (1) and (2) are precisely the knots
with crossing number up to 10, whose unknotting number and tunnel number are both larger that 1. So, any other knot has unknotting number of tunnel number equal to 1, and then have transient number 1. The knots in (1) are precisely the knots 
whose double branched cover has non-cyclic first homology group, and furthermore these knots have tunnel number 2.
Therefore its transient number must be two. The knots in (2) have tunnel number two but  their double branched cover have cyclic first homology group, hence we cannot calculate the transient number yet.
\end{proof}

A similar result can be done for the knots of crossing number 11 or 12.

The following knots are interesting, for we use the homology of $p$-branched covers of a knot to determine the transient number.

\begin{theorem} The following knots have transient number $2$: $10_{99}$, $10_{123}$, $12a_{427}$, $12a_{435}$, 
$12a_{465}$, $12a_{466}$, $12a_{475}$, $12a_{647}$, $12a_{742}$, $12a_{801}$, $12a_{868}$, $12a_{975}$, 
$12a_{990}$, $12a_{1019}$, $12a_{1102}$, $12a_{1105}$, $12a_{1167}$, $12a_{1206}$, $12a_{1229}$, 
$12a_{1288}$, $12n_{518}$, $12n_{533}$, $12n_{604}$, $12n_{605}$, $12n_{642}$, $12n_{706}$, $12n_{840}$, $12n_{879}$, $12n_{888}$. 
\end{theorem}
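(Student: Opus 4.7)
The plan is to establish $tr(K) = 2$ for each listed knot $K$ by matching upper and lower bounds. For the upper bound $tr(K) \leq 2$, I would consult KnotInfo to confirm that each knot in the list satisfies either $u(K) \leq 2$ or $t(K) \leq 2$; combined with the inequalities $tr(K) \leq u(K)$ and $tr(K) \leq t(K)$ recalled in the introduction, this immediately gives $tr(K) \leq 2$.

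For the lower bound, the key tool is Theorem \ref{cubiertageneral-p}. Its contrapositive asserts that if, for some integer $p \geq 2$, the homology $H_1(\Sigma_p[K])$ of the $p$-fold branched cover requires strictly more than $p+1$ generators, then $tr(K) \geq 2$. Unlike the knots handled in the preceding theorem, every knot in the present list has $H_1(\Sigma[K])$ cyclic (otherwise it would already have been treated there via Theorem \ref{cubiertatransito}), so the case $p=2$ provides no information. The next step is therefore to pass to $p$-fold covers with $p \geq 3$ and find one for which the rank does exceed $p+1$.

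Concretely, for each knot $K$ I would compute $H_1(\Sigma_p[K])$ for small values of $p$, say $p \in \{3,4,5,6,7\}$, starting from a Seifert matrix or Alexander module presentation of $K$, and place the resulting integer presentation matrix in Smith normal form. The minimum number of generators of $H_1(\Sigma_p[K])$ equals the number of nontrivial elementary divisors; I would then look for the smallest $p$ at which this count exceeds $p+1$. Such data is readily available in KnotInfo for the listed knots, and in each case a suitable $p$ (often already $p=3$) can be identified.

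The main obstacle is that the witnessing value of $p$ varies from knot to knot, and verifying the rank (as opposed to merely the order $\prod_{j=1}^{p-1}|\Delta_K(\zeta_p^j)|$) of $H_1(\Sigma_p[K])$ requires the full invariant factor decomposition, not just the determinant of the presentation matrix. Once a suitable $p$ has been exhibited for each of the listed knots and the rank bound checked, the resulting lower bound $tr(K) \geq 2$ combines with the upper bound from KnotInfo to yield $tr(K) = 2$.
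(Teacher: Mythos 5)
Your overall strategy---upper bound $tr(K)\leq t(K)\leq 2$ from KnotInfo, lower bound from the homology of branched covers---is the same as the paper's, but your execution of the lower bound has a genuine gap. The premise that ``every knot in the present list has $H_1(\Sigma[K])$ cyclic'' is false: many of the listed knots have non-cyclic double branched cover homology, for instance $10_{99}$ ($\mathbb{Z}_9+\mathbb{Z}_9$), $10_{123}$ ($\mathbb{Z}_{11}+\mathbb{Z}_{11}$), $12a_{427}$ ($\mathbb{Z}_{15}+\mathbb{Z}_{15}$), $12a_{742}$ ($\mathbb{Z}_3+\mathbb{Z}_{45}$), and $12n_{642}$ ($\mathbb{Z}_3+\mathbb{Z}_3+\mathbb{Z}_3$); the preceding theorem treated only knots of at most $10$ crossings, so the $12$-crossing knots with non-cyclic $H_1(\Sigma[K])$ were not ``already handled.'' This error is not harmless, because after discarding $p=2$ you rely exclusively on the rank bound of Theorem \ref{cubiertageneral-p}, namely $rank(H_1(\Sigma_p[K]))\leq p+1$ whenever $tr(K)=1$, and for several knots in the list this bound is never violated by any $p$ in the range available in KnotInfo. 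For $12a_{742}$ (and likewise $12a_{801}$) the ranks of $H_1(\Sigma_p[K])$ for $p=2,\dots,9$ are $2,4,2,2,7,2,2,4$, each at most $p+1$; for $12a_{1206}$ they are $2,4,4,6,5,4,4,4$; for $12n_{642}$ they are $3,2,3,4,4,2,3,2$. Your search for ``a suitable $p$ (often already $p=3$)'' comes up empty for these knots, so your proof does not establish $tr(K)\geq 2$ for them.

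The missing ingredient is Theorem \ref{cubiertatransito}, which at $p=2$ is strictly sharper than Theorem \ref{cubiertageneral-p}: $tr(K)=1$ forces $H_1(\Sigma[K])$ to be \emph{cyclic}, not merely of rank at most $3$. The paper's proof uses both criteria in tandem: for the knots whose $H_1(\Sigma[K])$ is non-cyclic (including the four above), non-cyclicity alone gives $tr(K)\geq 2$; for the knots whose $H_1(\Sigma[K])$ \emph{is} cyclic ($12a_{465}$, $12a_{466}$, $12a_{475}$, $12a_{868}$, $12a_{1102}$, $12a_{1167}$, $12a_{1229}$, $12n_{533}$, $12n_{840}$, $12n_{879}$), one passes to a $p$-fold cover exactly as you propose---e.g.\ $H_1(\Sigma_6[K])$ for $K=12a_{465}$ has rank $8>7$. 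Your method therefore covers the second family but not the first; to repair the argument you must reinstate the $p=2$ cyclicity criterion alongside the rank bound for higher $p$.
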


\begin{proof} According to Theorem \ref{cubiertageneral-p}, if $K$ has $tr(K)=1$, then $rank(H_1(\Sigma_p[K]) \leq p+1$. Using this and the information given in KnotInfo \cite{Knot}, we show that these knots cannot have transient number one. 
As they have tunnel
number two, in fact must also have transient number two. Below in the Table, there is a list of the knots with the corresponding
homology group needed for the proof. For some of them, it is enough to use the homology of $\Sigma[K]$,
but not for all. A symbol $\{6,\{2,2,2,10,20,340,0,0\}\}$ means that 
$H_1(\Sigma_6[K])=\mathbb{Z}_2+\mathbb{Z}_2+\mathbb{Z}_2+\mathbb{Z}_{10}+\mathbb{Z}_{20}+\mathbb{Z}_{340}+\mathbb{Z}+\mathbb{Z}$.
\end{proof}

%\begin{center} TABLE 1 \end{center}

\begin{scriptsize}
\begin{center}
\begin{tabular}{|c|c|c|}

\hline
&$ 10_{99}$	& $\{2,\{9,9\}\},\quad \{6,\{2,2,6,6,0,0,0,0\}\}$ \\
\hline
& $10_{123}$ & 	$\{2,\{11,11\}\},\quad \{5,\{2,2,2,2,2,2,2,2\}\}$ \\
\hline
& $12a_{427}$	 & $\{2,\{15,15\}\},\quad  \{4,\{3,3,3,3,15,15\}\},\quad \{6,\{4,4,20,20,0,0,0,0\}\} $ \\
\hline
&$12a_{435}$	 & $\{2,\{3,75\}\},\quad \{6,\{2,2,8,200,0,0,0,0\}\}$ \\
\hline
& $12a_{465}$	& $\{6,\{2,2,2,2,2,2,38,9158\}\}$ \\
\hline
&$12a_{466}$ &	$\{6,\{2,2,2,2,2,2,26,5434\}\} $\\
\hline
& $12a_{475}$ &	$\{6,\{2,2,2,10,20,340,0,0\}\} $\\
\hline
& $12a_{647}$ & 	$\{2,\{3,51\}\},\{6,\{2,2,2,34,0,0,0,0\}\} $\\
\hline
%&12a_{742} &	{{2,{3,45}},{6,{5,5,15,0,0,0,0}}mmmmmmm \\

%& 12a_{801}	& {{2,{3,45}},{6,{5,5,15,0,0,0,0}}mmmmmmmm \\

&$12a_{868}$ &	$\{5,\{2,2,2,2,8,8,88,88\}\}$ \\
\hline
& $12a_{975}$	& $\{2,\{5,45\}\},\quad \{4,\{5,5,5,5,5,45\}\} $\\
\hline
&$12a_{990}$ & $\{2,\{3,75\}\}\quad \{6,\{2,2,8,200,0,0,0,0\}\} $ \\
\hline
&$12a_{1019}$	& $\{2,\{19,19\}\},\quad \{5,\{6,6,6,6,6,6,6,6\}\} $ \\
\hline
&$12a_{1102}$ &	 $\{6,\{2,2,2,2,2,2,112,34160\}\} $\\
\hline
& $12a_{1105}$	& $\{2,\{17,17\}\},\quad \{6,\{2,2,2,2,10,10,170,170\}\} $ \\
\hline
&$12a_{1167}$ &	$\{5,\{2,2,2,2,2,2,82,82\}\} $\\
\hline
%& $12a_\{1206\}$	& $\{\{2,\{7,35\}\},\quad \{5,\{11,11,11,11,11,11\}\},$mmmmmm \\

& $12a_{1229} $&	$\{5,\{2,2,2,2,8,8,8,8\}\}$ \\
\hline
& $12a_{1288}$	& $\{2,\{3,39\}\},\quad \{6,\{2,2,2,26,0,0,0,0\}\}$ \\
\hline
&$12n_{518}$ &	$\{2,\{3,21\}\}, \quad \{6,\{2,2,4,28,0,0,0,0\}\} $ \\
\hline
&$12n_{533}$	& $\{6,\{2,2,2,2,2,42,0,0\}\} $\\
\hline
&$12n_{604}$ &	$\{2,\{3,27\}\},\quad \{6,\{2,2,2,18,0,0,0,0\}\} $ \\
\hline
& $12n_{605}$	& $\{2,\{3,3\}\},\quad \{6,\{2,2,2,2,0,0,0,0\}\} $ \\
\hline
%& 12n_{642}	[3,4]	3&	{ {2,{3,3,3}},{3,{23,23}},{4,{3,51,51}},{5,{2,2,158,158}},{6,{3,9,207,207}},{7,{3527,3527}},{8,{3,6477,6477}},{9,{34753,34753}}} \\

&$12n_{706}$	&  $\{2,\{7,7\}\},\quad \{5,\{3,3,3,3,3,3,3,3\}\},\quad \{6,\{2,2,2,2,2,2,14,14\}\}$ \\
\hline
&$12n_{840} $ &	$\{6,\{2,2,2,2,2,2,10,1190\}\}$ \\
\hline
&$12n_{879}$ & 	$\{5,\{2,2,2,2,4,4,4,4\}\} $ \\
\hline
&$12n_{888}$ &	$\{2,\{3,15\}\},\quad \{6,\{2,2,2,10,0,0,0,0\}\} $ \\
\hline

\end{tabular}
\end{center}
\end{scriptsize}

\section{Transient number and connected sums}\label{connectedsums}

It is natural to consider the behavior of a knot invariant with respect to connected sums. It is easy to see that $u(K_1 \# K_2) \leq u(K_1) + u(K_2)$, and the equality is conjectured to happen. It is also not difficult to see that $t(K_1 \# K_2)\leq t(K_1)+t(K_2)+1$. There are known examples of knots with 
$t(K_1 \# K_2) = t(K_1)+t(K_2)+1$ \cite{MSY}, examples with $t(K_1 \# K_2) = t(K_1)+t(K_2)$, and examples with 
$t(K_1 \# K_2) < t(K_1)+t(K_2)$ \cite{M}. So, we can expect a similar inequality for the transient number. 

\begin{theorem} Let $K_1$, $K_2$ be knots in $S^3$. Then $tr (K_1\# K_2) \leq tr (K_1)+tr (K_2) + 1$. \end{theorem}

\begin{proof} Let $K_1$ be a knot with transient number $tr(K)=n$, and let $\{ \gamma_1,\gamma_2,\dots,\gamma_n\}$ 
be a system of transient arcs for $K_1$. Let $T_1=\mathcal{N(}K\cup \gamma_1 \cup \gamma_2 \dots \cup \gamma_n)$. 
Then $T_1$ is a genus $n+1$ handlebody with the property that $K_1$ can be homotoped in the interior of $T_1$ to the 
trivial knot in $S^3$. We can assume that the homotopy that transform $K_1$ into the trivial
knot can be realized by a sequence of ambient isotopies of $K_1$ and crossing changes. So, suppose that after making
isotopies, all crossing changes are performed simultaneously. Suppose $r$ crossing changes are performed,
numbered $1,2,\dots,r$, and for each crossing change let $\alpha_i$ be an arc with endpoints in $K_1$
which remembers the crossing change, that is, if $B_i$ is a regular neighborhood of $\alpha_i$, in fact a 3-ball that intersects $K_1$ in two unknotted arcs, then a crossing change can be performed 
inside each $B_i$ to get the trivial knot.
Make an isotopy to move $K_1$ to its original position,
and then $\{\alpha_1,\alpha_2,\dots,\alpha_r\}$ is a collection of disjoint arcs with endpoints in $K_1$ contained  in $T_1$. 
Let $\delta_1$ be an arc in $T_1$ with an endpoint in $K_1$ and the other in $\partial N(K_1)$, such that $\delta_1$ is disjoint from the arcs $\alpha_i$. 

If $K_2$ is knot with $tr(K_2)=m$, then as above there is a genus $m+1$ handlebody that is the neighborhood
of $K$ union a system of transient arcs $\{ \gamma'_1,\gamma'_2,\dots,\gamma'_m\}$, and there is a collection of 
arcs $\{ \beta_1,\dots,\beta_s\}$ that determines
crossing changes that unknot $K_2$. Let $\delta_2$ be an arc in $T_2$ with an endpoint in $K_2$ and the other in 
$\partial N(K_2)$, such that $\delta_2$ is disjoint from the arcs $\beta_i$.

Suppose that $T_1$ and $T_2$ lie in disjoint 3-balls $C_1$ and $C_2$ contained in $S^3$. Suppose that
 $\partial T_i \cap \partial C_i$ consists of a disk $D_i$, such that the endpoint of $\delta_i$ lying in $\partial T_i$, 
 it lies in $D_i$, for $i=1,2$. Do a disk sum of $T_1$ and $T_2$, identifying $D_1$ and $D_2$, such that the endpoints of 
 $\delta_1$ and $\delta_2$ coincide. Let $\delta=\delta_1 \cup \delta_2$,
this is an arc with an endpoints in $K_1$ and $K_2$. Following $\delta$, do a band sum of $K_1$ and $K_2$.
As $K_1$ and $K_2$ lie in disjoint 3-balls, this band sum is in fact a connected sum $K_1\# K_2$. Let
$T=T_1\cup T_2$, this is a genus $n+m+2$ handlebody, and $K_1\# K_2$ can be homotoped to the trivial knot
inside it, to see that just do crossing changes following the arcs $\alpha_i$ and $\beta_j$. Now note that $T$
is the regular neighborhood of $K_1\# K_2$ and a system of $n+m+1$ arcs, that is, the $n$ arcs for a
system fo $K_1$, the $m$ arcs for a system of $K_2$, plus one more arc which is dual to the band used to
perform the connected sum of $K_1$ and $K_2$, see Figure \ref{connectedsum}. 
This shows that the transient number of $K_1\# K_2$ is at most $n+m+1$.
 \end{proof}
 
 \begin{figure}

% \begin{center}
\includegraphics[angle=0, width=8true cm]{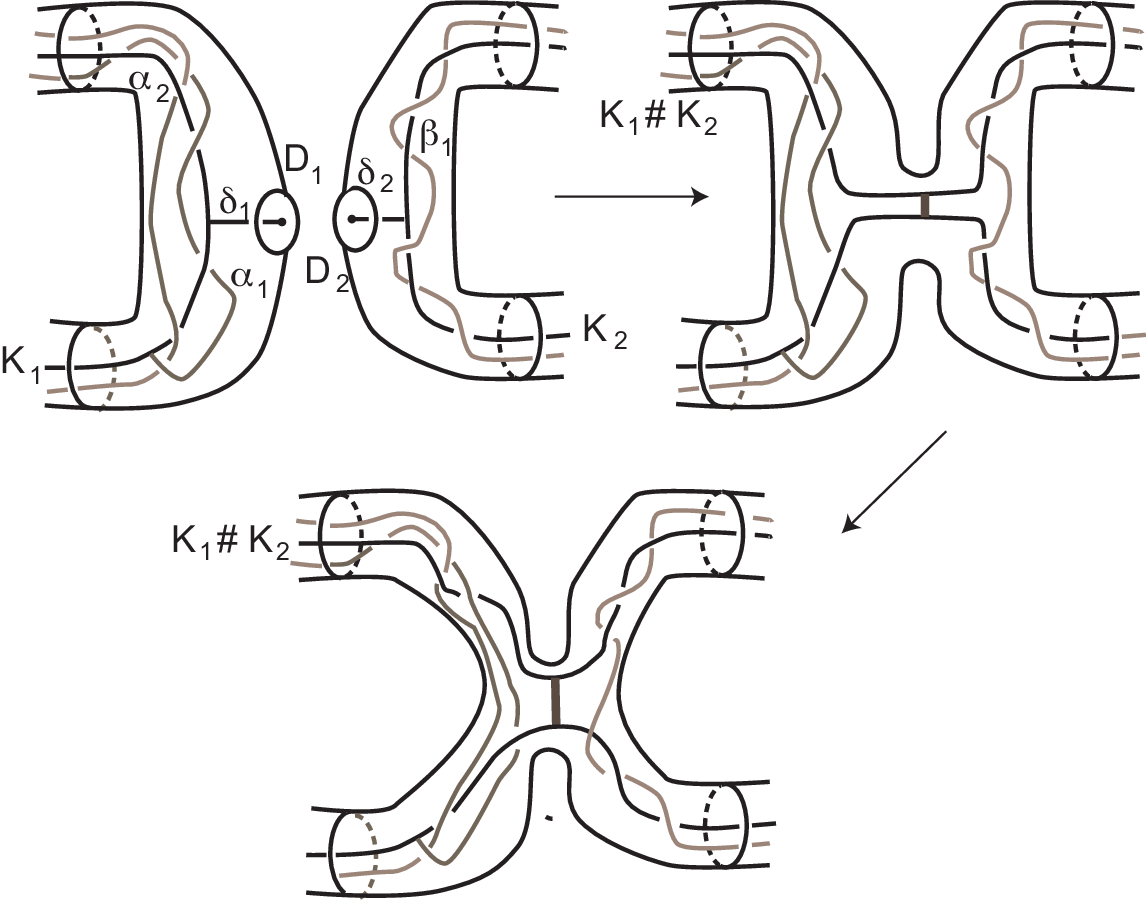}

\caption{}
\label{connectedsum}
%\end{center}

\end{figure}
 In many cases we can ensure that $tr(K_1\# K_2)$ is at most $tr(K_1)+ tr(K_2)$. For example, if the arc systems
 that unknot $K_1$ and $K_2$ are disjoint from a meridian disk for $N(K_1)$ and a meridian disk for $N(K_2)$,
 then it can be shown that no more than $tr(K_1) + tr(K_2)$ arcs are needed to unknot $K_1\# K_2$. 
 
 There are examples of knots
 $K_1$, $K_2$, such that $t(K_1)=1=t(K_2)$, but $t(K_1\# K_2)=3$ \cite{MSY}. For these example, it is clear that $tr(K_1)=1=tr(K_2)$, but it is not clear what is $tr(K_1\# K_2)$.
 
 There are also examples of knots  $K_1$, $K_2$, such that $t(K_1)=2$, $t(K_2)=1$, but $t(K_1\# K_2)=2$ \cite{Mr}.
 In this case $tr(K_2)=1$ and $tr(K_1\# K_2)\leq 2$, but it is not clear whether $tr(K_1)=1$ or $2$. 
 
 It is well known that knots with unknotting number one or tunnel number one are prime, but the proofs are not so easy.
 The first proof that knots $K$ with $u(K)=1$ are prime \cite{S}, uses heavy combinatorial arguments, a second
 proof uses sutured manifold theory \cite{ST}, and a third proof depends on double branched covers and deep
 results on Dehn surgery on knots \cite{Z}. There are also two proofs that tunnel number one knots are prime,
 one uses combinatorial group theory \cite{N}, and other uses combinatorial arguments \cite{Sc}. 
 A proof that transient number one
 knots are prime would imply both, that unknotting number one and tunnel number one knots are prime, 
 so it may not be easy to prove that. However seems reasonable to conjecture the following.
 
 \begin{conjecture} If $K$ is a knot with $tr(K)=1$ then $K$ is prime.
 \end{conjecture}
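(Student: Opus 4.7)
The plan is to attempt a proof by contradiction. Assume $K = K_1 \# K_2$ is a nontrivial connected sum with $tr(K) = 1$; let $\tau$ be a transient arc, $T = \mathcal{N}(K \cup \tau)$ the associated genus-two handlebody, and $K' \subset T$ a trivial knot in $S^3$ homotopic to $K$ inside $T$. Let $S \subset S^3$ be an essential decomposing 2-sphere meeting $K$ transversely in two points, one summand on each side. First I would put $S$ in general position with $\partial T$ and minimize $|S \cap \partial T|$ by standard innermost-disk and outermost-arc arguments on $S$ and on $\partial T$, discarding trivial intersection components and retaining only curves that are essential on both surfaces.

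Second, I would pass to the double branched cover $\Sigma[K]$. Since $S$ meets $K$ in exactly two points, it lifts to a 2-sphere $\widetilde{S} \subset \Sigma[K]$; if both $K_i$ are nontrivial then $\widetilde{S}$ is essential and realizes $\Sigma[K] = \Sigma[K_1] \# \Sigma[K_2]$. By Theorem \ref{icubiertatransito}, $H_1(\Sigma[K])$ is cyclic, which forces $H_1(\Sigma[K_1]) \oplus H_1(\Sigma[K_2])$ to be cyclic, and hence the two summands to have coprime orders. This rules out a large class of potential counterexamples but does not yet yield a contradiction, since many pairs of nontrivial knots have coprime determinants.

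Third, the heart of the argument must exploit the full geometric force of $tr(K) = 1$, beyond the homology bound. One plausible route is to lift $\partial T$ to a closed surface in $\Sigma[K]$ and analyze how $\widetilde{S}$ can be placed in general position with respect to the resulting decomposition, invoking Haken's lemma to simplify the intersection and then performing a combinatorial case analysis leveraging the uniqueness of prime decompositions of $\Sigma[K]$. An alternative is to argue inside $T$ directly: the homotopy from $K$ to $K'$ must sweep across $S \cap T$ in a controlled way, and the existence of essential planar surfaces in $T \setminus K$ or in $Ext(T)$ obstructing such a sweep could be detected using sutured manifold methods, in the spirit of Scharlemann--Thompson.

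The principal obstacle, acknowledged by the authors, is that the conjecture simultaneously implies both Scharlemann's theorem on primality of unknotting-number-one knots and the Norwood--Scharlemann theorem on primality of tunnel-number-one knots, each of which required substantial machinery (sutured manifold theory \cite{ST}, Dehn surgery on knots \cite{Z}, or combinatorial group theory \cite{N}). Since the transient-number-one hypothesis is strictly weaker than either, a genuinely new idea will almost certainly be required: one that uniformly handles the tunnel-one case (where $Ext(T)$ is a handlebody) and the satellite configurations that appeared in the proof of Theorem \ref{icubiertatransito}, where $Ext(T)$ may only admit separating or non-separating compressing disks without being a handlebody.
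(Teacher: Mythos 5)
The statement you were asked to prove is not a theorem of the paper at all: it appears there only as a \emph{conjecture}, left open precisely because, as the authors note, any proof would simultaneously subsume Scharlemann's theorem that unknotting number one knots are prime and the Norwood--Scharlemann theorem that tunnel number one knots are prime. The only support the paper offers is the homological evidence of Theorem \ref{largetransitnumber}(1): since $H_1(\Sigma[K_1\# K_2])=H_1(\Sigma[K_1])\oplus H_1(\Sigma[K_2])$ and Theorem \ref{icubiertatransito} forces this group to be cyclic when $tr=1$, a knot cannot have transient number one if its two prospective summands have non-coprime determinants (e.g.\ $K\# K$ with $K$ of nontrivial determinant). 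Your steps one and two reconstruct exactly this evidence, correctly: the decomposing sphere lifts to an essential sphere splitting $\Sigma[K]$ as $\Sigma[K_1]\#\Sigma[K_2]$, and cyclicity of $H_1$ forces both summand groups to be cyclic of coprime order. But this is where the paper stops, and it is where your proposal stops as well.

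The genuine gap is your third step, which is the entire mathematical content of the problem, and it is not an argument but a list of candidate strategies (``one plausible route is \dots'', ``an alternative is \dots''). Nothing in the proposal shows how to derive a contradiction when the determinants \emph{are} coprime --- say $K_1\# K_2$ with determinants $3$ and $5$ --- which is exactly the case the homology bound cannot see. Neither the Haken's lemma approach nor the sutured-manifold approach is carried far enough to identify what quantity or structure would be contradicted; in particular, you do not explain how the compressibility analysis of $Ext(T)$ from the paper's proof of Theorem \ref{icubiertatransito} (handlebody case, non-separating disk case, separating disk case) would interact with the swallow--follow or meridional behavior of the decomposing sphere relative to $T$. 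You are candid that a genuinely new idea is required, and that candor is warranted: as it stands, the proposal establishes only the coprimality restriction already implicit in the paper, and the conjecture remains open.
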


Theorem \ref{largetransitnumber} (1) gives some evidence  for this conjecture.
 
\vskip30pt

\textbf{Acknowledgments.}
 This research was supported by a grant from the National Autonomous University of Mexico, UNAM-PAPIIT IN116720.

\end{document}